\documentclass{amsart}

\usepackage{hyperref}

\usepackage{amssymb}
\usepackage{amsmath}
\usepackage{amsthm}
\usepackage{enumerate}
\usepackage{graphicx}
\usepackage{tikz-cd}

\theoremstyle{plain}

\newtheorem{theorem}{Theorem}[section]

\newtheorem{lemma}[theorem]{Lemma}
\newtheorem{corollary}[theorem]{Corollary}
\newtheorem{question}[theorem]{Question}

\theoremstyle{definition}
\newtheorem{definition}[theorem]{Definition}

\theoremstyle{remark}
\newtheorem*{remark}{Remark}

\renewcommand {\tilde} {\widetilde}

\setlength \parindent{0cm}

\begin{document}

\title{On the analytic and Cauchy capacities}

\date{November 26, 2014}

\author[M. Younsi]{Malik Younsi}
\thanks{Supported by NSERC}
\address{Department of Mathematics, Stony Brook University, Stony Brook, NY 11794-3651, United States.}
\email{malik.younsi@stonybrook.edu}

\keywords{Analytic capacity, Ahlfors functions, Cauchy capacity, Cauchy transform.}
\subjclass[2010]{primary 30C85, 30E20.}

\begin{abstract}
We give new sufficient conditions for a compact set $E \subseteq \mathbb{C}$ to satisfy $\gamma(E)=\gamma_c(E)$, where $\gamma$ is the analytic capacity and $\gamma_c$ is the Cauchy capacity. As a consequence, we provide examples of compact plane sets such that the above equality holds but the Ahlfors function is not the Cauchy transform of any complex Borel measure supported on the set.
\end{abstract}

\maketitle

\section{Introduction}
Let $E$ be a compact subset of the complex plane $\mathbb{C}$. The \textit{analytic capacity} of $E$ is defined by
$$\gamma(E):=\sup \{|f'(\infty)|: f \in H^{\infty}(\Omega), |f| \leq 1\},$$
where $\Omega$ is the unbounded component of $\mathbb{C}_\infty \setminus E$, the complement of $E$ with respect to the Riemann sphere, $H^{\infty}(\Omega)$ is the class of all bounded holomorphic functions on $\Omega$ and $f'(\infty):=\lim_{z \to \infty}z(f(z)-f(\infty))$.
\\

Analytic capacity was first introduced by Ahlfors in his celebrated paper \cite{AHL} for the study of a problem generally attributed to Painlev\'e in 1888 asking to find a geometric characterization of the compact sets that are removable for bounded holomorphic functions. It was later observed by Vitushkin \cite{VIT} that analytic capacity is a fundamental tool in the theory of uniform rational approximation of holomorphic functions.
\\

It follows easily from the definition that analytic capacity is monotonic, i.e. $\gamma(E) \leq \gamma(F)$ whenever $E \subseteq F$, and that analytic capacity is \textit{outer regular} in the sense that if $E_1 \supseteq E_2 \supseteq \dots$ is a decreasing sequence of compact sets, then $\gamma(E_k) \to \gamma(\cap_n E_n)$ as $k \to \infty$. Furthermore, it is well-known that for any compact set $E$ of positive analytic capacity, there exist a unique function $f \in H^{\infty}(\Omega)$ with $|f| \leq 1$ and $f'(\infty)=\gamma(E)$, called the \textit{Ahlfors function for} $E$ or \textit{on} $\Omega$. Note that the extremality of $f$ implies that it must vanish at the point $\infty$. By convention, the Ahlfors function is defined to be identically zero on each bounded component of $\mathbb{C}_\infty \setminus E$. We also mention that in some particular cases, the properties of the Ahlfors function are well-known. For instance, if $\Omega$ is a nondegenerate $m$-connected domain, then the Ahlfors function $f$ is a degree $m$ proper holomorphic map of $\Omega$ onto $\mathbb{D}$. In particular, if $E$ is connected and contains more than one point, it is simply the Riemann map, normalized so that $f(\infty)=0$ and $f'(\infty)>0$. For more information on the elementary properties of analytic capacity and Ahlfors functions, we refer the reader to \cite{GAR} and \cite{TOL}.
\\

Following its emergence in 1947, analytic capacity quickly acquired the reputation to be quite difficult to study and its properties have remained mysterious for several decades. The main recent advances in the field are due to Tolsa \cite{TOL3}, who proved that analytic capacity is comparable to a quantity which is easier to comprehend since it is more suitable to real analysis tools. More precisely, define the capacity $\gamma_{+}$ of a compact set $E$ by
$$\gamma_{+}(E):=\sup \{\mu(E) : \operatorname{supp}{\mu} \subseteq E, |\mathcal{C}_\mu| \leq 1 \, \, \mbox{on}\, \, \mathbb{C}_\infty \setminus E\},$$
the supremum being taken over all positive Radon measures $\mu$ supported on $E$ such that the \textit{Cauchy transform}
$$\mathcal{C}_\mu (z):=\int \frac{1}{\xi - z} d\mu(\xi)$$
is bounded by one in modulus outside $E$. Note that for any such measure, $\mathcal{C}_\mu$ is analytic outside $E$ with $\mathcal{C}_\mu'(\infty)=-\mu(E)$, thus we have $\gamma_{+}(E) \leq \gamma(E)$. Tolsa's remarkable result states that $\gamma(E) \leq C \gamma_{+}(E)$ for some universal constant $C$. This theorem has several important consequences. For instance, it gives a complete solution to Painlev\'e's problem for arbitrary compact sets involving the notion of curvature of a measure introduced by Melnikov \cite{MEL}. A previous solution for sets of finite length was obtained earlier by David \cite{DAV}. Moreover, since $\gamma_{+}$ was previously shown by Tolsa to be comparable with a quantity that is subadditive, it follows that $\gamma$ is semi-additive, meaning that there is a universal constant $C'$ such that $\gamma(E \cup F) \leq C'(\gamma(E)+\gamma(F))$ for all compact sets $E,F$. This solved a very difficult problem raised by Vitushkin in 1967. The interested reader may consult \cite{TOL} for more details. We mention in passing that it is not known whether analytic capacity is subadditive; in other words, if $C'$ can be taken equal to 1. See \cite{YOU} for more information on this problem.
\\

A closely related concept is the so-called \textit{Cauchy capacity} of $E$, noted by $\gamma_c(E)$ and defined as
$$\gamma_c(E):= \sup \{|\mu(E)| : \operatorname{supp}{\mu} \subseteq E, |\mathcal{C}_\mu| \leq 1 \, \, \mbox{on}\, \, \mathbb{C}_\infty \setminus E\},$$
where $\mu$ is a finite complex Borel measure supported on $E$. Apparently, the term ``Cauchy capacity'' was used for the first time by Havinson in \cite{HAV}. Clearly, $\gamma_c(E) \leq \gamma(E) \leq C \gamma_{c}(E)$ for all compact sets $E$, where $C$ is the comparability constant in Tolsa's result. In particular, it follows that $\gamma(E)=0$ if and only if $\gamma_{c}(E)=0$, which is quite nontrivial. Our main motivation for the present paper is the study of the following open question :

\begin{question}
\label{prob1}
Is the analytic capacity actually equal to the Cauchy capacity? In other words, is it true that
\begin{equation}
\label{eqprob1}
\gamma(E)=\gamma_c(E)
\end{equation}
for all compact sets $E \subseteq \mathbb{C}$?
\end{question}

Apparently Question \ref{prob1} was raised for the first time by Murai (see \cite[Section 3]{MUR}). It was later studied by Havinson in \cite{HAV2} and \cite{HAV3}. See also \cite[Section 5]{TOL2}.
\\

Equality (\ref{eqprob1}) is known to hold only in some very special cases, such as compact sets of finite painlev\'e length. We say that a compact set $E$ has \textit{finite Painlev\'e length} if there is a number $l$ such that every open set $U$ with $E \subseteq U$ contains a cycle $\Gamma$ surrounding $E$ that consists of finitely many disjoint analytic Jordan curves and has length less than $l$. The infimum of such numbers $l$ is called the \textit{Painlev\'e length} of $E$. The following theorem essentially follows from Cauchy's integral formula and a limiting argument.

\begin{theorem}[Havinson \cite{HAV4}]
\label{ThmPainleve}
Suppose that $E$ has finite Painlev\'e length $k$. If $f$ is any bounded holomorphic function on $\mathbb{C}_\infty \setminus E$ with $f(\infty)=0$, then there is a complex measure $\mu$ supported on $E$ such that $\|\mu\| \leq k \|f\|_{\infty}/2\pi$ and
$$f(z)=\mathcal{C}_\mu(z) \qquad (z \in \mathbb{C}_\infty \setminus E).$$
\end{theorem}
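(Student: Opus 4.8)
The plan is to establish the representation $f = \mathcal{C}_\mu$ by first representing $f$ on each fixed open neighborhood via Cauchy's integral formula on an approximating cycle, and then passing to a weak-$*$ limit of the resulting measures. Since $E$ has finite Painlev\'e length $k$, for each $n$ I can choose an open set $U_n$ with $E \subseteq U_n$, shrinking to $E$, that contains a cycle $\Gamma_n$ surrounding $E$ consisting of finitely many disjoint analytic Jordan curves with $\operatorname{length}(\Gamma_n) < k + 1/n$. The function $f$ is holomorphic on $\mathbb{C}_\infty \setminus E \supseteq \Gamma_n$, and because $f(\infty)=0$, Cauchy's integral formula on the region bounded by $\Gamma_n$ (together with the value at $\infty$) gives, for every $z$ outside $\Gamma_n$ in the unbounded component,
\begin{equation*}
f(z) = \frac{1}{2\pi i}\int_{\Gamma_n} \frac{f(\xi)}{\xi - z}\,d\xi.
\end{equation*}
Writing $d\mu_n := \frac{1}{2\pi i} f(\xi)\, d\xi|_{\Gamma_n}$, this says precisely $f(z) = \mathcal{C}_{\mu_n}(z)$ for $z$ in the unbounded region exterior to $\Gamma_n$, with the sign/orientation conventions matched to the definition of $\mathcal{C}_\mu$. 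The total variation estimate is immediate: $\|\mu_n\| \leq \frac{1}{2\pi}\|f\|_\infty \operatorname{length}(\Gamma_n) < \|f\|_\infty (k+1/n)/2\pi$.

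Next I would extract a weak-$*$ convergent subsequence. Each $\mu_n$ is a complex measure supported on $\Gamma_n \subseteq U_n$, and by the uniform bound $\sup_n \|\mu_n\| \leq (k+1)\|f\|_\infty/2\pi$ the sequence is norm-bounded. Viewing the $\mu_n$ as elements of the dual of $C(K)$ for a fixed large compact $K$ (say a closed disk containing all the $U_n$), the Banach--Alaoglu theorem yields a subsequence converging weak-$*$ to a complex measure $\mu$ on $K$. Because the supports shrink, $\operatorname{supp}\mu \subseteq \bigcap_n \overline{U_n} = E$, and lower semicontinuity of the norm under weak-$*$ limits preserves the estimate $\|\mu\| \leq k\|f\|_\infty/2\pi$ (the $1/n$ terms disappearing in the limit). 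The remaining point is to show $f = \mathcal{C}_\mu$ on $\mathbb{C}_\infty \setminus E$: for a fixed $z \notin E$, the function $\xi \mapsto 1/(\xi - z)$ is continuous on $E$, and for all large $n$ we have $z$ in the exterior region of $\Gamma_n$, so $f(z) = \mathcal{C}_{\mu_n}(z) = \int \frac{1}{\xi - z}\, d\mu_n(\xi) \to \int \frac{1}{\xi - z}\, d\mu(\xi) = \mathcal{C}_\mu(z)$ by weak-$*$ convergence.

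The main obstacle is the bookkeeping in the limiting step: I must ensure that for each fixed $z \notin E$ the test function $\xi \mapsto 1/(\xi-z)$ can legitimately be integrated against the weak-$*$ limit, which requires that $1/(\xi-z)$ extend to a genuine element of $C(K)$ bounded near $\operatorname{supp}\mu_n$. This is handled by noting that once $z$ is fixed outside $E$ there is a positive distance from $z$ to $E$, and since the supports $\Gamma_n$ eventually lie in an arbitrarily small neighborhood of $E$, the integrand $1/(\xi - z)$ is uniformly bounded and continuous on all $\Gamma_n$ for $n$ large, so the weak-$*$ convergence applies cleanly. A secondary technical care is the correct treatment of the point at infinity and the orientation of the cycle $\Gamma_n$ so that the residue at $\infty$ (where $f(\infty)=0$) does not contribute an extra term; this is exactly where the hypothesis $f(\infty)=0$ is used, guaranteeing that the Cauchy integral over $\Gamma_n$ reproduces $f$ in the unbounded component without a constant correction.
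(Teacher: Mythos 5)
Your overall strategy is exactly the one the paper points to: the paper itself gives no proof (it quotes the result from Havinson, remarking that it ``essentially follows from Cauchy's integral formula and a limiting argument,'' and refers to Garnett's book), and your plan --- Cauchy's formula on cycles of length $<k+1/n$ in shrinking neighborhoods $U_n$, then a weak-$*$ limit --- is that standard argument. The weak-$*$ bookkeeping is handled correctly: Banach--Alaoglu, shrinking supports forcing $\operatorname{supp}\mu \subseteq \bigcap_m \overline{U_m}=E$, lower semicontinuity of the total variation killing the $1/n$, and the cutoff needed to make $\xi \mapsto 1/(\xi-z)$ a legitimate test function in $C(K)$.

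There is, however, one step that fails as written: the claim that for each fixed $z \notin E$ and all large $n$, the point $z$ lies ``in the exterior region'' (the unbounded component of the complement) of $\Gamma_n$. This is false whenever $\mathbb{C}_\infty \setminus E$ has a bounded component, and such sets are squarely within the scope of the theorem --- indeed the paper applies it, via Corollary \ref{coro1}, to the sets $E^1_k$ of Section \ref{sec4}, which enclose a bounded complementary component. Concretely, let $E$ be the unit circle and $z=0$: any cycle surrounding $E$ inside a thin annular neighborhood must consist of two nested curves with opposite orientations, and $0$ is never in the unbounded component of its complement. What makes the argument work at such points is the winding number, not the topology of $\mathbb{C}\setminus \Gamma_n$: the homology form of Cauchy's theorem, applied to the cycle $C_R-\Gamma_n$ (null-homologous in $\mathbb{C}\setminus E$), with $R\to\infty$ and $f(\infty)=0$, gives $f(z)=\mathcal{C}_{\mu_n}(z)$ at every $z\notin E$ with $n(\Gamma_n,z)=0$, in whichever component of the complement of $\Gamma_n$ the point $z$ lies; and the correct reading of ``$\Gamma_n$ surrounds $E$ within $U_n$'' --- winding number $1$ at points of $E$ and $0$ at points outside $U_n$ --- guarantees $n(\Gamma_n,z)=0$ as soon as $\operatorname{dist}(z,E)>1/n$. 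This separation property is not optional: if cycles were only required to wind once around $E$, a single curve just outside the unit circle would qualify, and the resulting measures have Cauchy transform identically zero inside the disk, so the representation would genuinely fail there. With the limiting step restated in terms of winding numbers, your proof is complete and coincides with the argument the paper cites.
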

See also \cite[Theorem 3.1, Chapter 2]{GAR}.
\\

In particular, applying the above result to the Ahlfors function for $E$, we obtain
\begin{corollary}
\label{coro1}
If $E$ has finite Painlev\'e length, then
$$\gamma(E)=\gamma_{c}(E).$$
\end{corollary}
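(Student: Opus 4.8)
The plan is to combine the trivial inequality $\gamma_c(E) \leq \gamma(E)$, which already holds for every compact set, with Theorem \ref{ThmPainleve} applied to the Ahlfors function, so that the whole argument reduces to exhibiting a single admissible complex measure whose total mass realizes $\gamma(E)$. If $\gamma(E) = 0$ then $\gamma_c(E) \leq \gamma(E) = 0$ and there is nothing to prove, so I may assume $\gamma(E) > 0$ and let $f$ denote the Ahlfors function for $E$.

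First I would regard $f$ as a bounded holomorphic function on the whole of $\mathbb{C}_\infty \setminus E$: on the unbounded component $\Omega$ it is the extremal function with $|f| \leq 1$ and $f'(\infty) = \gamma(E)$, while by the stated convention it is identically zero on each bounded component. In particular $f$ is holomorphic on every component of $\mathbb{C}_\infty \setminus E$, satisfies $\|f\|_\infty \leq 1$, and vanishes at $\infty$, so it meets the hypotheses of Theorem \ref{ThmPainleve}.

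Next I would apply Theorem \ref{ThmPainleve}, with $k$ the Painlev\'e length of $E$, to produce a complex Borel measure $\mu$ supported on $E$ with $f(z) = \mathcal{C}_\mu(z)$ for all $z \in \mathbb{C}_\infty \setminus E$. The key point is that this identity holds on the \emph{entire} complement, so that $|\mathcal{C}_\mu| = |f| \leq 1$ there; this is precisely the admissibility condition in the definition of $\gamma_c(E)$. Expanding $\mathcal{C}_\mu$ near $\infty$ as in the remark following the definition of $\gamma_{+}$ gives $\mathcal{C}_\mu'(\infty) = -\mu(E)$, and since $f = \mathcal{C}_\mu$ we obtain $|\mu(E)| = |\mathcal{C}_\mu'(\infty)| = |f'(\infty)| = \gamma(E)$.

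Finally, because $\mu$ is admissible in the supremum defining $\gamma_c(E)$, I conclude $\gamma_c(E) \geq |\mu(E)| = \gamma(E)$, which together with $\gamma_c(E) \leq \gamma(E)$ yields the desired equality. The argument is essentially immediate once Theorem \ref{ThmPainleve} is granted; the only point requiring care is the bookkeeping around the Ahlfors function's convention on bounded components, which is exactly what guarantees that the representing measure satisfies $|\mathcal{C}_\mu| \leq 1$ on \emph{all} of $\mathbb{C}_\infty \setminus E$ rather than merely on $\Omega$.
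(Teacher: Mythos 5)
Your proof is correct and is essentially the paper's own argument: the paper derives Corollary \ref{coro1} precisely by applying Theorem \ref{ThmPainleve} to the Ahlfors function (extended by zero on bounded components) and noting that the resulting representing measure is admissible for $\gamma_c$ with $|\mu(E)| = |f'(\infty)| = \gamma(E)$. Your added bookkeeping about the convention on bounded components and the sign in $\mathcal{C}_\mu'(\infty) = -\mu(E)$ is exactly the right care to take, and nothing is missing.
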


A consequence of Corollary \ref{coro1} is that a positive answer to Question \ref{prob1} would follow if one could prove that the Cauchy capacity $\gamma_c$ is outer regular, because every compact subset of the plane can be obtained as a decreasing sequence of compact sets with finite Painlev\'e length.
\\

In this paper, we prove the following result, which can be viewed as a generalization of Corollary \ref{coro1} to sets of $\sigma$-finite Painlev\'e length.

\begin{theorem}
\label{thm2}
Let $E \subseteq \mathbb{C}$ be compact and suppose that there exist a sequence $(E_k)_{k \in \mathbb{N}}$ of compact subsets of $E$ with the following properties :

\begin{enumerate}[\rm(i)]
\item [(i)] every $E_k$ has finite Painlev\'e length;
\item [(ii)] there exist an integer $m$ such that $\Omega$ and every $\Omega_k$ are nondegenerate $m$-connected domains, where $\Omega_k$ and $\Omega$ are the unbounded components of $\mathbb{C}_\infty \setminus E_k$ and $\mathbb{C}_\infty \setminus E$ respectively;
\item [(iii)] the sequence of domains $(\Omega_k)_{k \in \mathbb{N}}$ converges to $\Omega$ in the sense of Carath\'eodory.
\end{enumerate}
Then $\gamma(E)=\gamma_c(E)$.
\end{theorem}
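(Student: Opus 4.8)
The plan is to reduce the theorem to a single purely analytic statement, namely the convergence of analytic capacities $\gamma(E_k)\to\gamma(E)$, and then to use hypothesis (i) together with Corollary \ref{coro1} to transfer this to the Cauchy capacity. Since $\gamma_c(E)\le\gamma(E)$ always holds, it suffices to prove $\gamma_c(E)\ge\gamma(E)$. For each $k$, let $f_k$ be the Ahlfors function on $\Omega_k$. Because $E_k$ has finite Painlev\'e length, Theorem \ref{ThmPainleve} applied to $f_k$ produces a complex measure $\mu_k$ supported on $E_k$ with $\mathcal{C}_{\mu_k}=f_k$ on $\mathbb{C}_\infty\setminus E_k$, and since $\mathcal{C}_{\mu_k}'(\infty)=-\mu_k(E_k)=f_k'(\infty)$ we get $|\mu_k(E_k)|=\gamma(E_k)$. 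The crucial point is that $E_k\subseteq E$ forces $\mathbb{C}_\infty\setminus E\subseteq\mathbb{C}_\infty\setminus E_k$, so $|\mathcal{C}_{\mu_k}|=|f_k|\le 1$ on $\mathbb{C}_\infty\setminus E$; hence each $\mu_k$ is admissible in the definition of $\gamma_c(E)$, giving $\gamma_c(E)\ge\gamma(E_k)$ for every $k$. Thus $\gamma_c(E)\ge\sup_k\gamma(E_k)$, and the theorem will follow once we show $\sup_k\gamma(E_k)=\gamma(E)$.

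To establish this, first observe that $\Omega$ is a connected subset of $\mathbb{C}_\infty\setminus E_k$ containing $\infty$, so $\Omega\subseteq\Omega_k$ for all $k$; by monotonicity $\gamma(E_k)\le\gamma(E)$, and only the lower bound $\liminf_k\gamma(E_k)\ge\gamma(E)$ remains. Here I would invoke hypotheses (ii) and (iii). The functions $f_k$ are uniformly bounded, so by Montel's theorem together with the kernel part of (iii)---every compact subset of $\Omega$ lies in $\Omega_k$ for large $k$---some subsequence converges locally uniformly on $\Omega$ to a function $g$ with $|g|\le 1$, $g(\infty)=0$, and $g'(\infty)=\lim_j\gamma(E_{k_j})$. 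The goal is to identify $g$ with the Ahlfors function $f$ of $\Omega$; by uniqueness of the Ahlfors function it is enough to show $g$ is extremal, i.e. $g'(\infty)=\gamma(E)$.

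This identification is where the $m$-connectedness is essential and where I expect the main obstacle to lie. Since $\Omega$ and each $\Omega_k$ are nondegenerate $m$-connected, each $f_k$ is a proper degree-$m$ map onto $\mathbb{D}$, so that $\log|f_k|=-g_{\Omega_k}(\cdot,\infty)-\sum_{i=1}^{m-1}g_{\Omega_k}(\cdot,a_i^{(k)})$, where $g_{\Omega_k}$ is the Green's function of $\Omega_k$ and the $a_i^{(k)}$ are the finite zeros of $f_k$. Carath\'eodory convergence forces $g_{\Omega_k}(\cdot,\infty)\to g_\Omega(\cdot,\infty)$ locally uniformly and makes the Robin constants converge, while the maximality (boundary) part of (iii) gives $\partial\Omega_k\to\partial\Omega$. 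The hard part is to rule out degeneration of the limit: one must prevent the zeros $a_i^{(k)}$ from escaping to $\partial\Omega$, since otherwise $g$ would be a proper map of degree strictly less than $m$ and $g'(\infty)$ could drop below $\gamma(E)$.

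I would attack this by controlling the zeros---either through the extremal characterization of the Ahlfors function on an $m$-connected domain, or by uniformizing all the $\Omega_k$ simultaneously to circular domains and exploiting the stability of that uniformization under Carath\'eodory convergence---to show the $a_i^{(k)}$ remain in a fixed compact subset of $\Omega$. Once this is known, $a_i^{(k)}\to a_i$, the limit $g$ has full degree $m$, and $g=f$, so that $\gamma(E_k)\to f'(\infty)=\gamma(E)$. Combined with the first paragraph this yields $\gamma_c(E)\ge\gamma(E)$, and hence equality. The anticipated bottleneck is entirely in the nondegeneracy argument of this last step; the transfer to $\gamma_c$ and the monotonicity bounds are routine by comparison.
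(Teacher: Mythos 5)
Your reductions are correct and coincide with the paper's: the inequality $\gamma_c(E)\le\gamma(E)$ is trivial, and feeding the Ahlfors function $f_k$ of $\Omega_k$ into Theorem \ref{ThmPainleve} to produce an admissible measure for $E$ (equivalently, invoking Corollary \ref{coro1} together with monotonicity of $\gamma_c$, as the paper does) correctly reduces everything to showing $\gamma(E_k)\to\gamma(E)$; you also correctly see that the only nontrivial half of this is $\liminf_k\gamma(E_k)\ge\gamma(E)$, and that the danger is degeneration of the limit of the $f_k$'s. But at exactly that point the proposal stops being a proof: you describe two strategies you \emph{would} pursue to keep the zeros $a_i^{(k)}$ in a fixed compact subset of $\Omega$, and you explicitly flag this as an unresolved bottleneck. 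That bottleneck is precisely the paper's main technical result (Theorem \ref{conv Ahlfors}, proved in Section \ref{sec2}), so what is missing is not a routine detail but the core content of the theorem.

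For comparison, the paper's argument is your second suggested route, and it never controls zeros at all. By Lemma \ref{conv Koebe}, the normalized Koebe maps $g_k:\Omega_k\to\Omega_k'$ converge locally uniformly to the normalized Koebe map $g:\Omega\to\Omega'$, and the circle domains $\Omega_k'$ converge circle-by-circle to $\Omega'$; the equal-connectivity hypothesis enters precisely here, to guarantee that the limiting circle domain is nondegenerate with exactly $m$ boundary circles. The crucial lower bound is then obtained by an explicit competitor on the circle domains: the Ahlfors function $\phi$ of $\Omega'$ is proper, hence extends holomorphically across $\partial\Omega'$ by Schwarz reflection; for large $k$ it is therefore defined on $\Omega_k'$, and $M_k^{-1}\phi$, where $M_k:=\sup_{\Omega_k'}|\phi|\to 1$, is admissible for $\Omega_k'$, giving $\liminf_k\phi_k'(\infty)\ge\phi'(\infty)$. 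Composing with the Koebe maps transfers this to $f_k'(\infty)\to f'(\infty)$, i.e.\ $\gamma(E_k)\to\gamma(E)$. Note finally that even the endgame of your sketch has a further gap: knowing that the zeros converge to points of $\Omega$, so that the limit is a proper map of degree $m$ vanishing at $\infty$, does not by itself give $g=f$, since for $m\ge 2$ the degree-$m$ proper maps vanishing at $\infty$ with positive derivative there form a positive-dimensional family of which only one member is extremal; some competitor argument of the above type would still be needed to identify the limit as the Ahlfors function.
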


Note that in Theorem \ref{ThmPainleve}, not only the Ahlfors function but every bounded holomorphic function on $\mathbb{C}_\infty \setminus E$ vanishing at infinity is the Cauchy transform of a complex measure supported on $E$. From the point of view of Question \ref{prob1}, a more interesting question is whether the Ahlfors function can always be expressed as the Cauchy transform of a complex measure. This was answered in the negative by Samokhin.

\begin{theorem}[Samokhin \cite{SAM}]
There exist a connected compact set $F$ with connected complement such that the Ahlfors function for $F$ is not the Cauchy transform of any complex measure supported on $F$.
\end{theorem}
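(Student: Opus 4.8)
The plan is to recast the property \emph{the Ahlfors function $f$ of $F$ is the Cauchy transform of a complex measure on $F$} as the boundedness of an explicit linear functional, and then to build $F$ together with a sequence of test functions witnessing unboundedness. First I would record the duality criterion. Fix a cycle $\sigma \subseteq \Omega$ surrounding $F$ and, for $g$ rational with poles off $F$, set
$$L(g):=\frac{1}{2\pi i}\oint_\sigma f(z)g(z)\,dz,$$
which is independent of $\sigma$ by Cauchy's theorem. If $f=\mathcal{C}_\mu$, then Fubini's theorem and a residue computation give $L(g)=-\int_F g\,d\mu$, so $|L(g)|\le\|\mu\|\,\sup_F|g|$. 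Conversely, if $L$ is bounded for the sup-norm on $F$, then by Hahn--Banach it extends to $C(F)$ and the Riesz representation theorem produces a complex measure $\mu$ on $F$; testing the resulting measure against the rational functions $g_{z_0}(\xi)=1/(\xi-z_0)$ with $z_0\in\Omega$ then recovers $\mathcal{C}_\mu=f$ on $\Omega$. Thus it suffices to construct a connected compact $F$ with connected complement and rational functions $g_n$ with poles off $F$ such that $\sup_F|g_n|\le 1$ while $|L(g_n)|\to\infty$.

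Next I would make $L$ explicit through the conformal geometry. Since $F$ is connected with connected complement, $\Omega=\mathbb{C}_\infty\setminus F$ is simply connected and, as recalled in the introduction, the Ahlfors function is the Riemann map; writing $\psi\colon\Omega\to\mathbb{D}^{\ast}=\{|w|>1\}$ for the conformal map fixing $\infty$ and $\Phi=\psi^{-1}$, one has $f=1/\psi$ up to a unimodular constant. Pushing the contour $\sigma$ out to the level curves $\{|\psi|=R\}$ and letting $R\to 1^{+}$ yields, after an elementary check of orientations,
$$L(g)=\frac{1}{2\pi}\int_0^{2\pi} g\bigl(\Phi(e^{i\theta})\bigr)\,\Phi'(e^{i\theta})\,d\theta .$$
Since $\sup_F|g|=\sup_\theta|g(\Phi(e^{i\theta}))|$ by the maximum principle, any $g$ with $\sup_F|g|\le 1$ obeys $|L(g)|\le\frac{1}{2\pi}\int_0^{2\pi}|\Phi'(e^{i\theta})|\,d\theta=\frac{1}{2\pi}\,\mathrm{length}(\partial\Omega)$. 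This recovers Havinson's theorem in the rectifiable case and, more importantly, shows where to look: $L$ can be unbounded only if $\partial\Omega$ is non-rectifiable, i.e.\ $\Phi'\notin L^1(\partial\mathbb{D})$, and to actually defeat the bound the boundary values of $g_n\circ\Phi$ must track the unimodular phase $\overline{\Phi'}/|\Phi'|$ on the arcs where $|\Phi'|$ is large, so that $L(g_n)\gtrsim\int_{A_n}|\Phi'|\,d\theta\to\infty$ with little cancellation.

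The core of the argument is therefore the simultaneous construction of $\Omega$ and the $g_n$. As a model I would take $\partial\Omega$ to be a self-similar non-rectifiable Jordan curve (for instance a von Koch--type snowflake), for which the exterior map $\Phi$ satisfies $\Phi'\notin L^1(\partial\mathbb{D})$ and whose self-similarity organises $\partial\Omega$ into geometric generations indexed by $n$. At generation $n$ one designs a rational $g_n$, normalised by $\sup_F|g_n|\le 1$, whose boundary values approximate $\overline{\Phi'}/|\Phi'|$ on the arcs carrying the $n$-th generation of spikes, so that $L(g_n)$ collects a contribution comparable to $\int_{A_n}|\Phi'|\,d\theta$, a quantity that diverges with $n$.

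The main obstacle is precisely this phase matching under the rigidity of holomorphy: one cannot prescribe the argument of an analytic function at will without breaking the constraint $|g_n\circ\Phi|\le 1$. The self-similar geometry is what I would exploit to overcome it, arranging that the scales decouple so that a single renormalised template can be transplanted to each generation, and so that the contributions of distinct generations add rather than cancel in $L(g_n)$; one must also verify that $F$ is genuinely a continuum with connected complement. Once $\sup_F|g_n|\le 1$ and $|L(g_n)|\to\infty$ are established, the duality criterion of the first paragraph shows that $f$ cannot be the Cauchy transform of any complex measure supported on $F$, which is the assertion.
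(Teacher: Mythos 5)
The duality reduction in your first paragraph is correct and standard (it is essentially Havinson's criterion: $f$ is a Cauchy transform of a measure on $F$ if and only if the functional $L$ is bounded in the sup-norm on $F$ over rational functions with poles off $F$). But it only reformulates the problem; the entire content of Samokhin's theorem lies in the step you leave as a plan, namely producing a compact set together with rational functions $g_n$ with $\sup_F|g_n|\le 1$ and $|L(g_n)|\to\infty$. You identify the obstruction yourself --- holomorphy prevents you from prescribing the phase of $g_n\circ\Phi$ at will --- and then assert that self-similarity will ``decouple the scales'' and allow a ``renormalised template'' to be transplanted across generations, but no such template is exhibited and no lower bound on $|L(g_n)|$ is proved. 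Note also that nonrectifiability of $\partial\Omega$ is only a necessary condition extracted from your upper bound $|L(g)|\le\frac{1}{2\pi}\,\mathrm{length}(\partial\Omega)$; it provides no lower bound whatsoever on $L$, and your boundary formula at $R=1$ is not even meaningful when $\Phi'\notin L^1(\partial\mathbb{D})$ (one must work at radii $R>1$, where $\int|\Phi'|\,d\theta$ is finite for each fixed $R$). So the proof has a genuine gap exactly at its core.

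There are, moreover, concrete reasons to doubt the snowflake as a model. The snowflake curve itself has disconnected complement, so to meet the hypotheses of the statement you must take $F$ to be the filled snowflake; but then a representing measure is allowed to charge the whole two-dimensional set (compare $\overline{\mathbb{D}}$, whose Ahlfors function $1/z$ equals $\mathcal{C}_\mu$ for $\mu=-\delta_0$), and you give no argument that $L$ is unbounded in this situation. The paper's proof (following Samokhin) uses a different mechanism that crucially requires a \emph{two-sided} nonrectifiable curve: it takes $E=\Gamma\cup[-i,i]$ with $\Gamma=\{x+ix\sin(1/x): x\in(0,1/\pi]\}$, so that $\Omega$ abuts $\Gamma$ from both sides; a local Cauchy-integral argument on rectangles avoiding the origin forces any representing measure to coincide on $\Gamma$ with the jump $(1/2\pi i)(f_+-f_-)\,d\zeta$, and this jump tends to a nonzero limit $e^{i\theta_1}-e^{i\theta_2}$ at $0$ because the two sides of $\Gamma$ determine two distinct accessible boundary points under the Riemann map; hence $\|\mu\|\ge c\int_{\Gamma\setminus\Gamma_k}|d\zeta|=\infty$. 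This converts nonrectifiability directly into infinite total variation without ever constructing test functions --- precisely the step your approach cannot avoid. For a one-sided boundary such as the snowflake's, that forcing disappears, which is why the phase-matching problem you face is genuinely hard and cannot be passed over as a construction detail.
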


In particular, $\gamma_{+}(F) < \gamma(F)$ by uniqueness of the Ahlfors function and by the fact that the capacity $\gamma_{+}$ of a compact set is always attained by some measure. It is not clear whether the latter is true if $\gamma_{+}$ is replaced by $\gamma_{c}$.

\begin{question}
\label{prob2}
Is it true that for every compact set $E$, there exist a complex Borel measure $\mu$ supported on $E$ such that
$$|\mathcal{C}_\mu(z)| \leq 1 \qquad (z \in \mathbb{C}_\infty \setminus E)$$ and
$\mu(E)=\gamma_c(E)$?
\end{question}

We shall give a negative answer to Question \ref{prob2} by proving the following result.

\begin{theorem}
\label{thm0}
There exist a connected compact set $E$ with connected complement such that $\gamma(E)=\gamma_c(E)$ but the Ahlfors function for $E$ is not the Cauchy transform of any complex Borel measure supported on $E$.
\end{theorem}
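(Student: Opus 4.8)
The plan is to produce a single compact set $E$ to which Theorem \ref{thm2} applies, giving $\gamma(E)=\gamma_c(E)$, and which at the same time realizes Samokhin's phenomenon. Since $E$ is to be connected with connected complement, its complement $\Omega=\mathbb{C}_\infty\setminus E$ is automatically simply connected and nondegenerate, so in Theorem \ref{thm2} one works throughout with the single value $m=1$. In this situation the Ahlfors function for $E$ is the normalized Riemann map $g\colon \Omega\to\mathbb{D}$ with $g(\infty)=0$ and $g'(\infty)=\gamma(E)>0$. The idea is thus to start from Samokhin's construction, to observe that it yields (or can be arranged to yield) a continuum of \emph{$\sigma$-finite} Painlev\'e length, and then to verify that the resulting exhaustion satisfies hypotheses (i)--(iii).

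Concretely, I would realize $E$ as $E=\overline{\bigcup_k E_k}$, where $(E_k)$ is an increasing sequence of subcontinua of finite Painlev\'e length (each, say, a finite union of analytic arcs or Jordan curves). Taking each $E_k$ connected forces the unbounded component $\Omega_k$ of $\mathbb{C}_\infty\setminus E_k$ to be simply connected, which is hypothesis (ii) with $m=1$, while hypothesis (i) holds by construction. Since $E_k\subseteq E$, one has $\Omega\subseteq\Omega_k$, and the domains $\Omega_k$ decrease; to obtain Carath\'eodory convergence $\Omega_k\to\Omega$ it suffices (modulo checking that boundary points of $\Omega$ are approached by boundary points of the $\Omega_k$) that $\bigcup_k E_k$ be dense in $E$, so that the Carath\'eodory kernel of $(\Omega_k)$ at $\infty$ is exactly $\Omega$. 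Theorem \ref{thm2} then immediately delivers $\gamma(E)=\gamma_c(E)$.

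For the second, negative, property I would import Samokhin's obstruction: if the Riemann map $g$ equalled $\mathcal{C}_\mu$ for some finite complex measure $\mu$ supported on $E$, then $\mu$ would be pinned down by the boundary behaviour of $g$, and one derives a contradiction from the wildness of $\partial\Omega$ (for instance, that the boundary data would force $\|\mu\|=\infty$, or that the jump relations required for a Cauchy transform cannot be matched by any finite measure carried by $E$). Consequently the nested family $(E_k)$ must be engineered following Samokhin's scheme, so that the limit continuum $E$ carries exactly this pathological boundary.

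The main obstacle is the tension between the two demands placed on $\partial\Omega$. Hypothesis (i) pulls $E$ toward tameness, since its boundary must be approximable from inside by short curves, whereas the failure of $g$ to be a Cauchy transform requires a genuinely non-rectifiable boundary. The core of the argument is therefore to design $(E_k)$ so that it is rectifiable at every finite stage and Carath\'eodory-convergent, yet accumulates to a boundary wild enough to defeat every Cauchy representation, and then to confirm that simple connectivity and the Carath\'eodory convergence truly persist in the limit. Verifying this compatibility---in effect, that Samokhin's example can be taken to be of $\sigma$-finite Painlev\'e length with a Carath\'eodory-convergent $1$-connected exhaustion---is where the real work lies.
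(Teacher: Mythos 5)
There is a genuine gap, and it is precisely at the point you defer to ``where the real work lies'': the compatibility you hope to verify is in fact impossible, so the route through Theorem \ref{thm2} with $m=1$ cannot be completed. Carath\'eodory convergence $\Omega_k\to\Omega$ forces the sets $E_k$ to come arbitrarily close to \emph{every} point of $E$ (otherwise the kernel of $(\Omega_k)$ would contain a disk centered at some point of $E$ and hence be strictly larger than $\Omega$). On the other hand, Samokhin's obstruction requires a nonrectifiable \emph{free} arc $J\subseteq\partial\Omega$ along which the Riemann map has two-sided boundary values with a jump bounded away from zero; near such an arc the only points of $E$ are points of $J$ itself, so each $E_k$ must contain points of $J$ close to both of its ends. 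But $E$ is connected with connected complement and empty interior, hence contains no Jordan curve, and therefore a \emph{connected} compact $E_k\subseteq E$ meeting $J$ near both ends must contain essentially the whole subarc of $J$ between them. Finally, a compact set containing the nonrectifiable arc (say the tail of $x+ix\sin(1/x)$) has \emph{infinite} Painlev\'e length: any cycle surrounding it inside a $\delta$-neighborhood must track the oscillations of height $\gtrsim\delta$, and the length of such cycles blows up like $\log(1/\delta)$. So hypotheses (i), (ii) with $m=1$, and (iii) are mutually inconsistent with the wild boundary needed for the negative half of the statement; the tension you flag at the end is not a technical difficulty but a fatal one.

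The paper's actual proof resolves this tension by giving up connectedness of the approximating sets, which is exactly what your framework forbids. It takes $E=\Gamma\cup[-i,i]$ with $\Gamma=\{x+ix\sin(1/x): x\in(0,1/\pi]\}$, and exhausts it by $E_k=(\Gamma\cap\overline{R_k})\cup[-i,i]$, a \emph{disconnected} set with two components; consequently $\Omega_k$ is doubly connected while $\Omega$ is simply connected, the connectivity drops in the limit, and Theorem \ref{thm2} does \emph{not} apply (its hypothesis (ii) fails). Instead the paper proves $\gamma(E_k)\to\gamma(E)$ directly: the normalized Koebe maps send $\Omega_k$ to circle domains whose two boundary circles converge to a pair of closed disks tangent at one point, and a bespoke convergence lemma (Lemma \ref{lem 2disks}, proved via slit-domain representations and Pommerenke's quarter-length theorem for linear sets) handles exactly this degenerating situation; then $\gamma_c(E)\le\gamma(E)=\lim_k\gamma(E_k)=\lim_k\gamma_c(E_k)\le\gamma_c(E)$ by Corollary \ref{coro1} and monotonicity. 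Your scheme, with connected $E_k$ and extra segments closing up the curve, is essentially what the paper does for Theorem \ref{thm1} in the case $m=1$ --- but there the resulting set has a bounded complementary component, so it cannot satisfy the connected-complement requirement of Theorem \ref{thm0}. That is exactly the trade-off your proposal overlooks.
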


Using Theorem \ref{thm2}, we can also obtain examples for any prescribed number of connected components.

\begin{theorem}
\label{thm1}
For any $m \in \mathbb{N}$, there exist a compact set $E^m$ with $m$ nondegenerate components such that $\gamma(E^m)=\gamma_c(E^m)$ but the Ahlfors function for $E^m$ is not the Cauchy transform of any complex Borel measure supported on $E^m$.
\end{theorem}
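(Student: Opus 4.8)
The plan is to manufacture $E^m$ out of the single-component example provided by Theorem \ref{thm0}. Denote by $E_0$ that connected compact set, so that $\Omega_0:=\mathbb{C}_\infty \setminus E_0$ is a nondegenerate simply connected domain, $\gamma(E_0)=\gamma_c(E_0)$, and the Ahlfors function $f_0$ for $E_0$ is not the Cauchy transform of any finite complex measure on $E_0$. Recall that $E_0$ is itself produced by Theorem \ref{thm2} with $m=1$, so it comes equipped with compact sets $E_{0,k}\subseteq E_0$ of finite Painlev\'e length whose complements $\Omega_{0,k}$ are nondegenerate simply connected domains converging to $\Omega_0$ in the sense of Carath\'eodory. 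Fixing points $a_1,\dots,a_m$ that are pairwise far apart, I would set $E^{(j)}:=E_0+a_j$ and $E^m:=\bigcup_{j=1}^m E^{(j)}$, a disjoint union of $m$ nondegenerate continua, together with the approximants $E^m_k:=\bigcup_{j=1}^m (E_{0,k}+a_j)$.

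To obtain $\gamma(E^m)=\gamma_c(E^m)$ I would verify the three hypotheses of Theorem \ref{thm2} for the sequence $(E^m_k)_{k\in\mathbb{N}}$. For (i), Painlev\'e length is subadditive over finite disjoint unions of compact sets (given disjoint neighborhoods of the copies, one surrounds each piece separately and takes the union of the resulting cycles), so every $E^m_k$ has finite Painlev\'e length. For (ii), each $E^{(j)}$ and each $E_{0,k}+a_j$ is a full nondegenerate continuum, and the sphere minus finitely many disjoint full continua is a nondegenerate $m$-connected domain; hence $\Omega^m$ and every $\Omega^m_k$ are nondegenerate $m$-connected. For (iii), the Carath\'eodory convergence $\Omega^m_k\to\Omega^m$ follows at once from the componentwise convergence $\Omega_{0,k}\to\Omega_0$, since the translation vectors $a_j$ are fixed and the copies are separated. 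Theorem \ref{thm2} then yields $\gamma(E^m)=\gamma_c(E^m)$.

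For the remaining assertion, I would argue by contradiction: suppose the Ahlfors function $f_m$ for $E^m$ equals $\mathcal{C}_\mu$ for some finite complex Borel measure $\mu$ on $E^m$, and decompose $\mu=\sum_{j=1}^m \mu_j$ with $\mu_j:=\mu|_{E^{(j)}}$, each of finite total variation. Fix an index $i$ and a neighborhood $V_i$ of $E^{(i)}$ disjoint from the other copies. For $j\ne i$ the transform $\mathcal{C}_{\mu_j}$ is holomorphic on $V_i$, so on $V_i\cap\Omega^m$ we have the local identity $\mathcal{C}_{\mu_i}=f_m-\sum_{j\ne i}\mathcal{C}_{\mu_j}$. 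In other words, near $E^{(i)}$ the Ahlfors function $f_m$ coincides, up to a function holomorphic across $\partial E^{(i)}$, with the Cauchy transform of the finite measure $\mu_i$ supported on the translate $E^{(i)}$ of $E_0$.

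The crux, and the step I expect to be the main obstacle, is to turn this local representation into a contradiction with Theorem \ref{thm0}, bearing in mind that the restriction of $f_m$ near $E^{(i)}$ is one sheet of a degree-$m$ proper map onto $\mathbb{D}$ and is not itself the Ahlfors function $f_0$. The mechanism I would rely on is that the obstruction in Theorem \ref{thm0} is one of total variation: reproducing the modulus-one boundary behavior of an inner-type function along the boundary $\partial E_0$, which has infinite (only $\sigma$-finite) Painlev\'e length, forces any representing measure to have infinite mass, in the spirit of the bound $\|\mu\|\le k\|f\|_\infty/2\pi$ of Theorem \ref{ThmPainleve}. Since $f_m$ has modulus-one boundary values along the congruent curve $\partial E^{(i)}$, and since adding a function holomorphic across $\partial E^{(i)}$ alters neither these boundary values nor the required mass, the finiteness of $\mu_i$ is untenable. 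Making this last point rigorous, precisely that the infinite-mass obstruction extracted from the proof of Theorem \ref{thm0} is both local along $\partial E^{(i)}$ and stable under the addition of a locally holomorphic function, is the heart of the argument; once it is in place, $f_m=\mathcal{C}_\mu$ is impossible and the theorem follows.
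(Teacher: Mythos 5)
There is a genuine gap, and it occurs at the very first step, where you write that $E_0$ (the set of Theorem \ref{thm0}) ``is itself produced by Theorem \ref{thm2} with $m=1$'' and hence comes equipped with finite-Painlev\'e-length compact subsets $E_{0,k}$ whose complements are \emph{simply connected} and converge to $\Omega_0$. This is false. The paper does \emph{not} prove Theorem \ref{thm0} via Theorem \ref{thm2}: the natural approximants of $E_0$ (a truncated arc of the sine curve together with the segment $[-i,i]$) are \emph{disconnected}, so their complements are doubly connected while $\Omega_0$ is simply connected, and hypothesis (ii) of Theorem \ref{thm2} fails. This connectivity jump is precisely why the paper needs a separate, delicate argument for Theorem \ref{thm0} (normalized Koebe/slit maps plus Lemma \ref{lem 2disks} on two disks degenerating to tangency). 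Moreover, no connected finite-Painlev\'e-length approximants of $E_0$ can exist: any connected compact subset of $E_0$ joining $[-i,i]$ to a point of $\Gamma$ with $x$ bounded away from $0$ must contain the entire oscillating tail of $\Gamma$, which has infinite Painlev\'e length (if it had finite Painlev\'e length, Corollary \ref{coro1} and the rest of the proof of Theorem \ref{thm0} would already be contradictory). Consequently, for your $E^m=\bigcup_j(E_0+a_j)$ the available approximants $E^m_k$ have $2m$-connected complements while $\Omega^m$ is $m$-connected, so Theorem \ref{thm2} simply does not apply. Repairing this would require a version of Lemma \ref{lem 2disks} for $2m$ disks with $m$ pairs becoming tangent in the limit --- exactly the generalization that the paper's closing remark identifies as an open problem it could not solve (the paper could not even handle a single copy of $E_0$ together with nice extra components, which is why its disconnected examples have disconnected complement).

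The paper's actual construction is designed to avoid this obstruction: it replaces $E_0$ by a different set $E^1$, the sine curve closed up by three segments into a loop, so that the truncated approximants $E^1_k$ remain \emph{connected} (the truncated arc attaches to the segments at the point $1/\pi$), complements on both sides are simply connected, and for $m>1$ one adds $m-1$ disjoint full continua $F_j$ of finite Painlev\'e length, keeping the connectivities of $\Omega^m_k$ and $\Omega^m$ equal to $m$ so Theorem \ref{thm2} applies. This choice also trivializes the step you correctly flag as the ``crux'' of your second half: since the Ahlfors function for $E^m$ vanishes identically on the bounded component enclosed by $E^1$, the jump across $\Gamma$ is $f_+-0=f_+$, of modulus one by properness, so the representing measure would need infinite variation on the nonrectifiable curve --- no boundary-correspondence analysis is needed. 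In your setup, by contrast, both sides of each curve lie in $\Omega^m$, and you would have to show that the two boundary values of a degree-$m$ proper map at the two accessible boundary points over $a_i$ stay uniformly apart, i.e.\ extend the prime-end injectivity argument from Riemann maps to proper maps of $m$-connected domains; you leave this unproved. So both halves of your proposal have gaps, the first one being fatal as stated.
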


Our construction is a bit simpler than the one in \cite{SAM}, although the latter can be generalized to obtain an example of a simply connected domain $\Omega$ such that for fairly general functionals on $H^{\infty}(\Omega)$, no extremal function can be represented as a Cauchy transform.
\\

The rest of the paper is organized as follows. In Sect. \ref{sec1}, we give a proof of Theorem \ref{thm0} based on a convergence lemma for the analytic capacity of two disjoint closed disks. Section \ref{sec2} contains the proof of a convergence theorem for Ahlfors functions based on the Carath\'eodory kernel convergence theorem for finitely connected domains and on Koebe's circle domain theorem. This convergence theorem is then used in Sect. \ref{sec3} to prove Theorem \ref{thm2}. Finally, Section \ref{sec4} is dedicated to the construction of the sets $E^m$ of Theorem \ref{thm1}.

\section{Proof of Theorem \ref{thm0}}
\label{sec1}
In this section, we prove Theorem \ref{thm0} by constructing a connected compact set $E$ with connected complement such that $\gamma(E)=\gamma_c(E)$, but the Ahlfors function is not the Cauchy transform of any complex Borel measure supported on $E$.
\\

First, we need some preliminaries on convergence in the sense of Carath\'eodory. We shall assume for the remaining of the section that $(\Omega_k)$ is a sequence of domains in the Riemann sphere, each containing the point $\infty$.

\begin{definition}
The \textit{kernel} of the sequence $(\Omega_k)$ (with respect to the point $\infty$) is defined to be the largest domain $\Omega$ containing the point $\infty$ such that if $K$ is a compact subset of $\Omega$, then there exist a $k_0$ such that $K \subseteq \Omega_k$ for all $k \geq k_0$, if such a domain exists. If not, then we say that the kernel of $(\Omega_k)$ does not exist.
\\

Furthermore, we say that a sequence $(\Omega_k)$ converges to $\Omega$ (in the sense of Carath\'eodory) if $\Omega$ is the kernel of every subsequence of $(\Omega_k)$. This is denoted by $\Omega_k \to \Omega$.
\end{definition}

Now, for $k \geq 1$, let $g_k$ be univalent on $\Omega_k$ and \textit{normalized at infinity}, meaning that
\begin{equation}
\label{norm}
g_k(z)=z+\frac{a_1}{z}+\frac{a_2}{z^2}+\dots
\end{equation}
in a neighborhood of the point $\infty$ or, equivalently, that $\lim_{z \to \infty} (g_k(z)-z)=0$. Note that the only M\"{o}bius transformation normalized at infinity is the identity.
\\

The following two results can be viewed as generalizations of the fact that the family of normalized Schlicht functions on the unit disk is normal and of the Carath\'eodory kernel convergence theorem for simply connected domains.

\begin{lemma}
\label{lem normal}
Suppose that the kernel of $(\Omega_k)$ exists and denote it by $\Omega$. Then there exist a subsequence $(g_{k_l})_{l \in \mathbb{N}}$ such that the kernel of $(\Omega_{k_l})$ is $\Omega$ and $(g_{k_l})_{l \in \mathbb{N}}$ converges locally uniformly to a univalent function $g$ on $\Omega$ normalized at infinity.
\end{lemma}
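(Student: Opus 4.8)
The plan is to imitate the classical proof that the normalized Schlicht functions form a normal family, with the fixed disk replaced by the varying domains $\Omega_k$ and the kernel hypothesis used to keep the underlying geometry stable. Before doing any analysis I would first arrange that the extracted subsequence has kernel \emph{exactly} $\Omega$. Since every compact $K \subseteq \Omega$ is eventually contained in $\Omega_k$, this property persists along every subsequence, so the kernel of any subsequence automatically contains $\Omega$; the real content is to prevent it from growing. Fixing a countable dense subset $\{\zeta_j\}$ of $\partial\Omega$, the maximality of $\Omega$ as the kernel forces the complements $\mathbb{C}_\infty \setminus \Omega_k$ to approach each $\zeta_j$ for infinitely many $k$. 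A diagonal extraction then yields a subsequence whose complements accumulate at every point of $\partial\Omega$, so that no further subsequence can have kernel strictly larger than $\Omega$. Replacing $(\Omega_k)$ by this subsequence, I may assume $\Omega_k \to \Omega$ in the sense of Carath\'eodory, whence every subsequence I subsequently extract will still have kernel $\Omega$.

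Next I would control the $g_k$ near infinity. As $\infty \in \Omega$, there is an $R$ with $\{|z|>R\}\cup\{\infty\} \subseteq \Omega$, hence contained in $\Omega_k$ for all large $k$. Restricting each $g_k$ to this exterior disk and rescaling to the exterior of the unit disk, the area theorem applied to the resulting univalent function bounds its Laurent coefficients by $|a_n^{(k)}| \le R^{n+1}/\sqrt{n}$, which gives a uniform estimate $|g_k(z)-z| \le C/|z|$ on $\{|z|\ge R'\}$ for every fixed $R'>R$. In particular, choosing a base point $z_0$ with $|z_0|>R'$, the values $g_k(z_0)$ and derivatives $g_k'(z_0)$ are uniformly bounded, with $|g_k'(z_0)|$ bounded away from $0$.

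The hard part will be upgrading this to local uniform boundedness on \emph{all} of $\Omega$, since a priori $g_k$ could be large far from infinity. To bound $g_k$ on an arbitrary compact $K \subseteq \Omega$, I would join $K$ to $z_0$ by a compact connected set inside $\Omega$ and cover it by finitely many disks with closures in $\Omega$, all of which lie in $\Omega_k$ for large $k$. On each such disk $g_k$ is univalent, so after rescaling to a Schlicht function the Koebe distortion theorem bounds $|g_k|$ and controls $|g_k'|$ up to a multiplicative constant in terms of the values at the center. Chaining these estimates outward from $z_0$, with a number of links depending only on $K$ and $\Omega$ and not on $k$, produces a bound on $\sup_K |g_k|$ that is uniform in $k$. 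Thus $\{g_k\}$ is locally uniformly bounded on $\Omega$, hence normal by Montel's theorem, and I extract a subsequence $g_{k_l} \to g$ locally uniformly on $\Omega$.

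Finally I would identify the limit. By Hurwitz's theorem the locally uniform limit of univalent functions is either univalent or constant, and the normalization rules out the constant case: uniform convergence on $\{|z|\ge R'\}$ forces the Laurent coefficients to converge, the coefficient of $z$ remains equal to $1$, and the estimate $|g_{k_l}(z)-z| \le C/|z|$ passes to the limit. Hence $g(z)=z+a_1/z+\cdots$ is univalent on $\Omega$ and normalized at infinity. Because this subsequence was drawn from one converging to $\Omega$ in the sense of Carath\'eodory, its kernel is still $\Omega$, which completes the argument. I expect the two genuinely delicate points to be the geometric chaining step of the third paragraph and the point-set bookkeeping in the diagonal extraction of the first.
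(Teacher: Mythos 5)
Your paragraphs 2--4 (the area-theorem bound near $\infty$, the Koebe-distortion chaining to get local uniform boundedness, then Montel, Hurwitz and the coefficient estimates identifying the limit as univalent and normalized) are the standard normality argument and are essentially correct; since the paper offers no proof of this lemma but only cites \cite[Section 15.4]{CON}, the real question is whether your argument stands on its own. It does not, and the gap is in your first paragraph. Your diagonal extraction produces a subsequence $S$ such that for each $\zeta_j$ and each radius $1/n$, \emph{infinitely many} $k\in S$ have $(\mathbb{C}_\infty\setminus\Omega_k)\cap B(\zeta_j,1/n)\neq\emptyset$. This does pin the kernel of $(\Omega_k)_{k\in S}$ itself at $\Omega$, but it does \emph{not} give Carath\'eodory convergence along $S$: that would require the complements to meet each such ball for \emph{all but finitely many} $k\in S$, since a further subsequence is free to avoid precisely the indices your extraction selected, and ``infinitely often'' is not inherited by further subsequences. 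So the sentence ``I may assume $\Omega_k\to\Omega$ in the sense of Carath\'eodory, whence every subsequence I subsequently extract will still have kernel $\Omega$'' is unjustified --- and your proof does subsequently extract (the Montel step), at which point the kernel may strictly increase; the closing claim ``its kernel is still $\Omega$'' therefore does not follow.

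Moreover, no bookkeeping can repair this reduction, because under the stated hypothesis the upgrade you want is impossible. Take $\Omega_k=\mathbb{C}_\infty\setminus[0,1]$ for $k$ odd and $\Omega_k=\mathbb{C}_\infty\setminus[2,3]$ for $k$ even. The kernel exists and equals $\Omega=\mathbb{C}_\infty\setminus([0,1]\cup[2,3])$; a subsequence has kernel $\Omega$ exactly when it contains infinitely many indices of each parity; and every such subsequence admits a single-parity further subsequence whose kernel is strictly larger, so \emph{no} subsequence converges to $\Omega$ in the Carath\'eodory sense. Worse, taking $g_k$ to be the identity for $k$ odd and the normalized Koebe map of $\mathbb{C}_\infty\setminus[2,3]$ for $k$ even (these two functions have distinct restrictions to $\Omega$: if they agreed there, the identity theorem would force the Koebe map to be the identity, making the slit domain a circle domain), the functions converge locally uniformly only along eventually-single-parity subsequences, whose kernels are not $\Omega$. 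Thus no subsequence satisfies both conclusions simultaneously, i.e.\ the lemma with only ``the kernel exists'' as hypothesis is itself defective, and any proof attempt must break somewhere --- yours breaks exactly at paragraph 1. Everything becomes correct under the stronger hypothesis under which the paper actually invokes the lemma (in Lemma \ref{lem 2disks}, Theorem \ref{thm0} and Lemma \ref{conv Koebe}), namely that $\Omega_k\to\Omega$ in the sense of Carath\'eodory: then every further subsequence automatically has kernel $\Omega$, your first paragraph can be deleted, and your remaining three paragraphs constitute a complete proof.
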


\begin{theorem}[Generalized Carath\'eodory kernel convergence theorem]
\label{thm cara}
Suppose that $\Omega_k \to \Omega$. Then $(g_k)_{k \in \mathbb{N}}$ converges locally uniformly on $\Omega$ to a univalent function $g$ normalized at infinity if and only if $(g_k(\Omega_k))$ converges to a domain $\tilde{\Omega}$. If this is the case, then $\tilde{\Omega}=g(\Omega)$ and $g_k^{-1} \to g^{-1}$ locally uniformly on $\tilde{\Omega}$.
\end{theorem}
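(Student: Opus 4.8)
The plan is to adapt the classical proof of the Carath\'eodory kernel convergence theorem for simply connected domains, with Lemma \ref{lem normal} supplying the normal-family compactness and the argument principle controlling the image domains $g_k(\Omega_k)$. The one structural novelty is that the maps are normalized at infinity rather than anchored at an interior point, which makes the inverse maps $g_k^{-1}$ available as a genuinely symmetric tool: since $g_k(\infty)=\infty$, each $g_k(\Omega_k)$ is again a domain containing $\infty$, and $g_k^{-1}$ is univalent and normalized at infinity on it with $g_k^{-1}(g_k(\Omega_k))=\Omega_k$. I would isolate at the outset the following \emph{inclusion lemma}, proved once and used twice: if $\Phi_k \to \Phi$ locally uniformly on a domain $D$ (with $\Phi$ univalent, normalized at infinity) and $D$ is contained in the kernel of the domains of definition $(D_k)$, then $\Phi(D)$ is contained in the kernel of $(\Phi_k(D_k))$. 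Its proof is the Hurwitz argument: for $w_0=\Phi(z_0)\in\Phi(D)$ pick a small closed disk $\overline{D_0}\subseteq D$ about $z_0$; then $\Phi-w_0$ has a single zero in $D_0$ and none on $\partial D_0$, and since $\overline{D_0}\subseteq D_k$ eventually and $\Phi_k\to\Phi$ uniformly on $\overline{D_0}$, the argument principle gives a zero of $\Phi_k-w_0$ in $D_0$, so $w_0\in\Phi_k(D_k)$; a uniform version over a compact exhaustion upgrades this to compact subsets of $\Phi(D)$.

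I would then prove the forward implication. Assume $g_k\to g$ locally uniformly on $\Omega$ with $g$ univalent and normalized at infinity, and set $\tilde{\Omega}:=g(\Omega)$. The inclusion lemma applied to $(g_k)$ gives $g(\Omega)\subseteq$ kernel. For the reverse inclusion, let $\tilde{\Omega}$ be the kernel of a subsequence of $(g_k(\Omega_k))$; by the previous step $\tilde{\Omega}\supseteq g(\Omega)$ is a domain containing $\infty$, so Lemma \ref{lem normal} extracts a further subsequence along which this kernel is preserved and $g_k^{-1}\to h$ locally uniformly on $\tilde{\Omega}$ to a univalent $h$ normalized at infinity. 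Composing the two convergences on $g(\Omega)$ (for $z\in\Omega$ one has $g_k^{-1}(g_k(z))=z$ while $g_k(z)\to g(z)\in\tilde{\Omega}$ and $g_k^{-1}\to h$) yields $h\circ g=\mathrm{id}$ on $\Omega$, hence $h=g^{-1}$ on $g(\Omega)$ and $h(g(\Omega))=\Omega$. Now the inclusion lemma applied to $(g_k^{-1})$, whose image domains are the $\Omega_k$ with kernel $\Omega$, gives $h(\tilde{\Omega})\subseteq\Omega$; combined with $h(\tilde{\Omega})\supseteq h(g(\Omega))=\Omega$ this forces $h(\tilde{\Omega})=\Omega$, and injectivity of $h$ gives $\tilde{\Omega}=g(\Omega)$. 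Since this holds for every subsequence, $g_k(\Omega_k)\to g(\Omega)$; and since every subsequential limit of $(g_k^{-1})$ is forced to equal $g^{-1}$, the whole sequence satisfies $g_k^{-1}\to g^{-1}$ locally uniformly on $\tilde{\Omega}$.

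For the converse, assume $g_k(\Omega_k)\to\tilde{\Omega}$. Because $\Omega_k\to\Omega$, Lemma \ref{lem normal} shows that every subsequence of $(g_k)$ has a further subsequence converging locally uniformly on $\Omega$ to some univalent $g$ normalized at infinity; applying the forward implication to that sub-subsequence gives $g(\Omega)=\tilde{\Omega}$. To upgrade subsequential convergence to convergence of the full sequence I would invoke uniqueness of the normalized conformal map: if $g_1,g_2$ are univalent, normalized at infinity, with $g_1(\Omega)=g_2(\Omega)$, then $g_2^{-1}\circ g_1$ is a conformal automorphism of $\Omega$ normalized at infinity, and in the coordinate $w=1/z$ it fixes $0$ with derivative $1$, so it is the identity by Cartan's uniqueness theorem. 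Hence all subsequential limits agree with one function $g$, so $g_k\to g$ locally uniformly, and the remaining assertions follow from the forward direction.

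The step I expect to be the main obstacle is the maximality half of the forward direction, namely excluding that the images converge to a domain strictly larger than $g(\Omega)$; the device of running the inclusion lemma on the inverse maps and extracting from it the identity $h(\tilde{\Omega})=\Omega$ is the crux, and getting the subsequence bookkeeping right (keeping $\tilde{\Omega}$ as the kernel while passing to the convergent subsequence of inverses) via the subsequence clause of Lemma \ref{lem normal} is where care is needed. A secondary technical point is the uniqueness of the normalized limit in the converse, which requires $\Omega$ to be hyperbolic so that Cartan's theorem applies; the degenerate cases in which the kernel reduces to a point or to a once-punctured sphere should be disposed of separately.
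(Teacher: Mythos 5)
The paper itself gives no proof of Theorem \ref{thm cara} --- it simply refers to \cite[Section 15.4]{CON} --- and your argument is a correct reconstruction of that standard proof: a Rouch\'e/Hurwitz inclusion lemma, normal-family compactness (Lemma \ref{lem normal}) applied to the inverse maps $g_k^{-1}$ (which are indeed again univalent and normalized at infinity), passage to the limit in $g_k^{-1}(g_k(z))=z$ to get $h\circ g=\mathrm{id}$, and a uniqueness statement for normalized conformal maps to pass from subsequential limits to full convergence. The two technical points are real but routine, and you flag them correctly: the Rouch\'e step must also be run in the chart $w=1/z$ so that compact subsets of $\Phi(D)$ containing $\infty$ are eventually covered by the image domains, and the uniqueness step needs the hyperbolic-surface version of Cartan's theorem, the cases where $\mathbb{C}_\infty\setminus\Omega$ has at most two points being settled directly because every automorphism then extends to a M\"obius transformation and the normalization at infinity forces the identity (while a degenerate kernel is excluded outright by the hypothesis $\Omega_k\to\Omega$).
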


For a proof of these results, see \cite[Section 15.4]{CON}.
\\

For the rest of the paper, we will be interested in the case where each $\Omega_k$ is a nondegenerate $m$-connected domain, for some fixed $m \in \mathbb{N}$. The following theorem of Koebe states that every such domain is conformally equivalent to a \textit{nondegenerate} $m$-\textit{connected circle domain}, that is, a domain whose complement is a union of $m$ disjoint closed round disks.

\begin{theorem}[Koebe's circle domain theorem]
Let $\Omega$ be a nondegenerate finitely connected domain. Then there exist a conformal map $g:\Omega \to \Omega'$, where $\Omega'$ is a nondegenerate circle domain. Moreover, if $g_1$ is another conformal map of $\Omega$ onto a nondegenerate circle domain, then $g_1=M \circ g$ for some M\"{o}bius transformation $M$.
\end{theorem}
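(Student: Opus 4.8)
The plan is to establish existence by Koebe's iterative ``rounding'' procedure, using the normality and kernel-convergence results stated above, and to establish uniqueness by a Schwarz reflection argument.

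First I would normalize by a preliminary M\"{o}bius transformation so that $\infty \in \Omega$; this is harmless, since both the class of circle domains and the statement ``unique up to a M\"{o}bius transformation'' are M\"{o}bius-invariant. Write $\partial \Omega = C_1 \cup \dots \cup C_m$ as a disjoint union of its boundary components, each a nondegenerate continuum. The elementary building block is a single-component rounding: for a fixed index $j$, the component $U_j$ of $\mathbb{C}_\infty \setminus C_j$ containing $\Omega$ is simply connected, so by the Riemann mapping theorem there is a univalent map of $U_j$ onto the complement of a closed round disk, which I normalize at infinity as in (\ref{norm}). Its restriction to $\Omega$ sends $C_j$ to an honest circle while carrying the remaining boundary components to continua (being a homeomorphism onto its image), producing a new normalized domain in which $C_j$ has been rounded.

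Next I would iterate, cycling periodically through the indices $1, 2, \dots, m, 1, 2, \dots$ and composing the normalized univalent rounding maps to obtain a sequence $(g_n)$ of univalent functions on $\Omega$ normalized at infinity. The relevant monotone quantity is the total area of the bounded complementary components of $g_n(\Omega)$: by an area-theorem comparison a rounding step never decreases it, while normality (Lemma \ref{lem normal}) bounds the coefficient $a_1$ in (\ref{norm}) uniformly and hence bounds this area from above. Applying Lemma \ref{lem normal} I extract a subsequence converging locally uniformly to a univalent $g$ on $\Omega$ normalized at infinity, and Theorem \ref{thm cara} then guarantees that the image domains $g_n(\Omega)$ converge in the sense of Carath\'eodory to $g(\Omega)$.

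The crux --- and the step I expect to be the main obstacle --- is to show that the limit $\Omega' := g(\Omega)$ is genuinely a nondegenerate circle domain and not merely a limit of near-circular domains. Since the monotone area functional converges, the maps relating consecutive stages of the iteration must tend to the identity; I would quantify the rounding step so that a component bounded away from a circle forces a definite increase of the functional, which is incompatible with convergence. Thus in the limit every boundary component is simultaneously a circle, and nondegeneracy (no complementary disk collapses to a point) follows from keeping all domains normalized together with the behaviour of complementary components under Carath\'eodory convergence. For uniqueness, if $g_1$ is a second conformal map of $\Omega$ onto a nondegenerate circle domain $\Omega_1'$, then $h := g_1 \circ g^{-1}$ is a conformal bijection $\Omega' \to \Omega_1'$ between circle domains, and it suffices to show $h$ is the restriction of a M\"{o}bius transformation. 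Here I would apply the Schwarz reflection principle repeatedly, reflecting $h$ across each boundary circle of $\Omega'$ (and the corresponding circle of $\Omega_1'$) and iterating; the reflections generate a Schottky-type group whose limit set $\Lambda$ is a totally disconnected compact set of zero area. The reflections extend $h$ to a homeomorphism of $\mathbb{C}_\infty$ that is conformal off $\Lambda$, and since such a null set is removable for conformal homeomorphisms of the sphere, $h$ is conformal everywhere, hence a M\"{o}bius transformation $M$, giving $g_1 = M \circ g$.
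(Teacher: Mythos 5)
First, a point of order: the paper does not prove this statement at all --- Koebe's circle domain theorem is quoted there as a classical result (with the surrounding convergence tools cited from Conway's book), so your proposal must stand on its own. As written, it has a genuine gap in each half.

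\emph{Existence.} Your convergence mechanism hinges on the claim that a rounding step never decreases the total area of the bounded complementary components. This is false. The area theorem controls only the component being rounded: if $K_j$ is the filled component and $\overline{D}$ the disk replacing it, then indeed $\mathrm{area}(\overline{D})=\pi\,\mathrm{cap}(K_j)^2\ge \mathrm{area}(K_j)$. It says nothing about the images of the \emph{other} components, and the normalized rounding map can shrink them. Concretely, let $K_1$ be the closed region bounded by the ellipse $\{w+b/w:|w|=1\}$ with $b>0$ small, so that the rounding map is $\phi=\psi^{-1}$, $\psi(w)=w+b/w$, and the area gained at $K_1$ is exactly $\pi b^2$. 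For $z$ near $3i/2$ we have $z=\psi(i\rho)$ with $\rho\approx 3/2$ and $\phi'(z)=\psi'(i\rho)^{-1}=(1+b/\rho^{2})^{-1}$, whence $|\phi'|^{2}\le 1-cb$ on a neighborhood of $3i/2$ for an absolute constant $c>0$. So if $K_2$ is the closed disk of radius $1/10$ centered at $3i/2$, rounding $K_1$ costs $K_2$ an area of order $b$, which swamps the $O(b^{2})$ gain for small $b$: the total complementary area strictly \emph{decreases} at this step. With the monotone functional gone, so is your criterion that a non-circular component forces a definite increase, and nothing prevents the subsequential limit furnished by Lemma \ref{lem normal} and Theorem \ref{thm cara} from being bounded by non-circular continua. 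This convergence is the real content of the theorem: the known proofs either show the rounding maps tend to the identity via quantitative distortion estimates (geometric decay of the deviation from circularity), or avoid iteration entirely (method of continuity, variational methods).

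\emph{Uniqueness.} The reflection scheme is the classical one, but your final step invokes the principle that a totally disconnected compact set of zero area is removable for conformal homeomorphisms of the sphere. No such principle holds: Bishop constructed compact sets of Hausdorff dimension $1$ (hence zero area) supporting homeomorphisms of $\mathbb{C}_\infty$ that are conformal off the set and not M\"obius. Zero area suffices only once the map is known to be \emph{globally quasiconformal}; then its Beltrami coefficient vanishes a.e.\ and Weyl's lemma applies. That is how to repair the step: extend $h$ quasiconformally into each complementary disk of $\Omega'$ (Beurling--Ahlfors, using analyticity of $h$ on each boundary circle), use the two reflection groups to replace this extension by the conformal reflection extension on all tiles of generation $\le n$ without increasing the dilatation, and take a normal-families limit to obtain a global $K$-quasiconformal extension whose dilatation is supported on the zero-area limit set. (Note also that even extending $h$ homeomorphically \emph{across} $\Lambda$, which you assert in passing, already requires the diameters of the generation-$n$ disks to tend to zero uniformly.)
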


It follows that for any nondegenerate finitely connected domain $\Omega$, there exist a unique Koebe map $g:\Omega \to \Omega'$ normalized at infinity. This conformal map $g$ is called the \textit{normalized Koebe map of} $\Omega$.
\\

We shall also need the following conformal representation result for nondegenerate doubly connected circle domains.

\begin{lemma}
\label{doubly}
Let $\Omega'$ be a nondegenerate doubly connected circle domain. Assume moreover that the circles bounding $\Omega'$ are centered on the real axis. Then there exist a unique conformal map $h:\Omega' \to \Omega''$ normalized at infinity, where $\Omega''$ is the complement in $\mathbb{C}_\infty$ of two disjoint closed intervals contained in the same horizontal line.
\end{lemma}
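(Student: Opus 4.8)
The plan is to derive $h$ from the classical parallel slit mapping theorem and then to exploit the reflection symmetry of $\Omega'$ in the real axis to force the two image slits to be collinear. This reduces the lemma to a general normalization theorem plus a short symmetry argument, so that no explicit computation of $h$ is needed.

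First I recall the parallel (horizontal) slit mapping theorem (see, e.g., \cite{CON}): for any finitely connected domain containing $\infty$ there is a \emph{unique} conformal map $h$, normalized at infinity as in (\ref{norm}), onto a domain whose complement in $\mathbb{C}_\infty$ is a finite union of (possibly degenerate) horizontal segments. Applying this to $\Omega'$, the two boundary circles are carried to two such segments, so $\Omega'':=h(\Omega')$ is the complement of two disjoint closed horizontal intervals. These intervals are genuinely nondegenerate, since a point boundary component would be a removable singularity for $h^{-1}$ and would contradict the assumption that $\Omega'$ is a nondegenerate doubly connected domain. Moreover, the complement of two collinear horizontal intervals is in particular a horizontal slit domain, so the uniqueness asserted in the lemma is immediate from the uniqueness in the slit theorem. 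It therefore remains only to prove that the two intervals lie on the same horizontal line.

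This is where the hypothesis that the bounding circles are centered on $\mathbb{R}$ enters: $\Omega'$ is then invariant under conjugation $c(z)=\bar z$. I set $\tilde h(z):=\overline{h(\bar z)}$. As $z$ ranges over $\Omega'$, so does $\bar z$, so $\tilde h$ is a conformal map of $\Omega'$ onto $\overline{\Omega''}$, which is the complement of the reflected segments and hence again a horizontal slit domain; and a glance at (\ref{norm}) shows that $\tilde h(z)=z+\bar a_1/z+\cdots$ is also normalized at infinity. By the uniqueness in the slit theorem, $\tilde h=h$, that is, $h\circ c=c\circ h$ on $\Omega'$.

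Finally, each disk $D_j$ is itself conjugation-invariant because it is centered on $\mathbb{R}$. Under $h$ the two slits $S_1,S_2$ correspond to the two boundary circles $\partial D_1,\partial D_2$; since $h$ intertwines the two conjugation actions and $c$ fixes each $\partial D_j$, each slit satisfies $c(S_j)=S_j$, so conjugation does not interchange the two slits. A horizontal segment invariant under $c$ must lie on $\mathbb{R}$, and hence both slits lie on the real axis, i.e.\ on the same horizontal line, which proves the lemma. I expect the main obstacle to be precisely this last step: one must justify passing the interior symmetry $h\circ c=c\circ h$ to the boundary and identifying each slit as the image of the corresponding boundary circle, so as to guarantee that $c$ fixes each $S_j$ rather than swapping them; this rests on the standard boundary correspondence for conformal maps of finitely connected domains onto slit domains. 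The remaining steps—normalization of $\tilde h$, the uniqueness deduction, and nondegeneracy of the slits—are routine.
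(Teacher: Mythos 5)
Your proof is correct, but it takes a genuinely different route from the paper's. The paper builds $h$ explicitly from the symmetry: since the bounding circles are centered on $\mathbb{R}$, the half-domain $\Omega'\cap\mathbb{H}$ (with $\mathbb{H}$ the upper half-plane) is a Jordan domain; mapping it conformally onto $\mathbb{H}$ with $\infty\mapsto\infty$ and extending by the Schwarz reflection principle yields a conformal map $\psi$ of $\Omega'$ onto the complement of two disjoint closed intervals lying \emph{in the real axis}, and post-composing with a suitable real linear map gives the normalization (\ref{norm}); only the uniqueness is quoted, from \cite{GOL}. Thus in the paper the collinearity of the slits is automatic from the reflection construction, and no general slit-mapping theorem is needed. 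You instead import the full classical parallel-slit theorem (existence and uniqueness of the normalized map onto a horizontal slit domain, with slits on possibly different lines) and recover collinearity a posteriori from the anti-conformal symmetry $c(z)=\bar z$: uniqueness forces $h\circ c=c\circ h$, and you then argue that $c$ fixes each slit, hence each slit lies on $\mathbb{R}$. Your route buys a stronger uniqueness statement for free (uniqueness among all horizontal-slit targets, not merely the collinear ones) and generalizes verbatim to any connectivity; its cost is the heavier imported theorem and the crux you yourself flag, namely ruling out that $c$ interchanges the two slits, for which you appeal to the correspondence of boundary components under conformal maps of finitely connected domains --- standard, but the least elementary ingredient in your argument. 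If you want to avoid it entirely: since $\Omega''=c(\Omega'')$, either $c$ fixes each slit (and then each lies on $\mathbb{R}$, as you say), or $c$ swaps them, in which case they lie at heights $\pm t$; if $t\neq 0$ then the circle $\mathbb{R}\cup\{\infty\}$ is contained in $\Omega''$, and since $h^{-1}$ also commutes with $c$, the set $h^{-1}(\mathbb{R}\cup\{\infty\})$ would be a Jordan curve contained in $(\mathbb{R}\cup\{\infty\})\cap\Omega'$, which is a disjoint union of two open arcs --- impossible. Either way both slits lie on $\mathbb{R}$, so your proof closes without any boundary-correspondence input.
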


Such a domain $\Omega''$ is called a \textit{doubly connected horizontal slit domain} and the map $h:\Omega' \to \Omega''$ is called the \textit{normalized slit map of} $\Omega$.

\begin{proof}
First note that $\Omega' \cap \mathbb{H}$ is a Jordan domain, where $\mathbb{H}$ is the upper half-plane. Let $\phi:\overline{\Omega' \cap \mathbb{H}} \to \overline{\mathbb{H}}$ be a homeomorphism conformal on $\Omega' \cap \mathbb{H}$ with $\phi(\infty)=\infty$. By the Schwarz reflection principle, the map $\phi$ can be extended to a conformal map  $\psi:\Omega' \to D$, where $D$ is the complement of two disjoint closed intervals in the real axis. Note that the limit $\lim_{z \to \infty} \psi(z)/z$ is positive, since $\phi$ must preserve the orientation of the boundary. Composing $\psi$ with an appropriate linear function yields a conformal map $h:\Omega' \to \Omega''$ normalized at infinity, where $\Omega''$ is a doubly connected horizontal slit domain.
\\

Finally, the uniqueness of the normalized slit map $h$ is well-known, see e.g. \cite[Section 2, Chapter 5]{GOL}.

\end{proof}

We can now prove the following convergence lemma for the analytic capacity of two disjoint closed disks.

\begin{lemma}
\label{lem 2disks}
For $k \in \mathbb{N}$, let $F_k$ be a union of two disjoint closed disks. Suppose that the centers and radii of the disks converge to $c_1,c_2$ and $r_1,r_2$ respectively, where the closed disks $\overline{\mathbb{D}}(c_1,r_1)$ and $\overline{\mathbb{D}}(c_2,r_2)$ intersect at exactly one point. Then
$$\gamma(F_k) \to \gamma(F),$$
where $F:=\overline{\mathbb{D}}(c_1,r_1) \cup \overline{\mathbb{D}}(c_2,r_2)$.
\end{lemma}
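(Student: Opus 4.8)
The plan is to establish the two one-sided estimates $\limsup_k \gamma(F_k) \le \gamma(F)$ and $\liminf_k \gamma(F_k) \ge \gamma(F)$ separately. The first is soft and uses only the elementary properties from the introduction: since the centers and radii converge, for every $\varepsilon > 0$ we have $F_k \subseteq F_\varepsilon$ for all large $k$, where $F_\varepsilon$ is the closed $\varepsilon$-neighborhood of $F$. By monotonicity $\gamma(F_k) \le \gamma(F_\varepsilon)$, so $\limsup_k \gamma(F_k) \le \gamma(F_\varepsilon)$, and letting $\varepsilon \to 0$ with $F_\varepsilon \downarrow F$ and invoking outer regularity gives $\limsup_k \gamma(F_k) \le \gamma(F)$. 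The entire difficulty lies in the reverse inequality, i.e.\ in showing that no capacity is lost in the degeneration where the two disjoint disks pinch to a single tangency point; this lower semicontinuity is the main obstacle.

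For the lower bound I would reduce everything to slits. Since the two disks are disjoint, the tangency is external, so $F$ is connected and hence $\Omega = \mathbb{C}_\infty \setminus F$ is simply connected, whereas each $\Omega_k$ is a nondegenerate doubly connected circle domain. Moreover $\Omega_k \to \Omega$ in the sense of Carath\'eodory: any compact $K \subseteq \Omega$ has positive distance to the two limit disks and therefore lies in $\Omega_k$ for large $k$, while no strictly larger domain qualifies because every neighborhood of a point of $F$ (including the tangency point) eventually meets $F_k$; the same holds for subsequences. After a rigid motion, which preserves $\gamma$ and tends to the identity, I may assume the two centers of each $F_k$ lie on $\mathbb{R}$, so that Lemma \ref{doubly} applies.

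Let $h_k \colon \Omega_k \to \Omega_k''$ be the normalized slit maps, with $\Omega_k'' = \mathbb{C}_\infty \setminus S_k$ and $S_k$ a union of two intervals of $\mathbb{R}$. Because $h_k$ is normalized at infinity, $f \mapsto f \circ h_k$ is a sup-norm-preserving bijection of $H^\infty(\Omega_k'')$ onto $H^\infty(\Omega_k)$ fixing the value at $\infty$ and the functional $f'(\infty)$, since $h_k(z) = z + a_1/z + \cdots$ forces $(f \circ h_k)'(\infty) = f'(\infty)$; applying the same to $h_k^{-1}$ (also normalized at infinity) yields $\gamma(F_k) = \gamma(S_k)$. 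Now pass to a subsequence realizing $\liminf_k \gamma(F_k)$; by Lemma \ref{lem normal} and Theorem \ref{thm cara} I may extract a further subsequence along which $h_k \to h$ locally uniformly on $\Omega$, with $h$ univalent and normalized at infinity, and $\Omega_k'' \to h(\Omega) =: \Omega''$ in the Carath\'eodory sense. The same invariance gives $\gamma(F) = \gamma(S)$, where $S := \mathbb{C}_\infty \setminus \Omega''$. Since $\Omega$ is simply connected so is $\Omega''$, and since each $S_k \subseteq \mathbb{R}$, any point off $\mathbb{R}$ lies in $\Omega''$; hence $S$ is a connected compact subset of $\mathbb{R}$, i.e.\ a single interval.

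Finally I would invoke the classical fact that $\gamma(E) = \tfrac14 \mathcal{L}^1(E)$ for compact $E$ contained in a line (see e.g.\ \cite{TOL}), so that $\gamma(F_k) = \tfrac14 \mathcal{L}^1(S_k)$ and $\gamma(F) = \tfrac14 \mathcal{L}^1(S)$, reducing the lower bound to $\liminf_k \mathcal{L}^1(S_k) \ge \mathcal{L}^1(S)$. This is the crux, and is where the geometry of the degeneration enters. The argument I expect to work is the following: if it failed, then along a subsequence $\mathcal{L}^1(S_k) \le \mathcal{L}^1(S) - \varepsilon$, and because each $S_k$ consists of at most two intervals, $\operatorname{int}(S) \setminus S_k$ would contain an interval $J_k$ of length $\ge \varepsilon/3$; a two-dimensional neighborhood of $\operatorname{int}(J_k)$ then lies in $\Omega_k''$. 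Passing to a Hausdorff limit $J_k \to J_* \subseteq S$ with $\mathcal{L}^1(J_*) \ge \varepsilon/3$, the maximality of the Carath\'eodory kernel forces $\operatorname{int}(J_*) \subseteq \Omega'' = \mathbb{C}_\infty \setminus S$, contradicting $J_* \subseteq S$. Thus $\liminf_k \gamma(F_k) \ge \gamma(F)$, which together with the upper bound proves the lemma. The feature that makes this tractable is precisely that the number of slits stays bounded by two, so a deficit in total length must concentrate in a macroscopic gap that survives to the limit and violates the kernel's maximality.
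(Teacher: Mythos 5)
Your proof is correct, and it runs on the same core machinery as the paper's: the rigid motion placing the centers on $\mathbb{R}$, the normalized slit maps of Lemma \ref{doubly}, compactness and kernel convergence via Lemma \ref{lem normal} and Theorem \ref{thm cara}, Pommerenke's quarter-length theorem, and the change-of-variable invariance of $\gamma$ under univalent maps normalized at infinity. The genuine difference is the endgame. The paper does not split into $\limsup$ and $\liminf$: along a subsequence it extracts limits $I_1, I_2$ of the two bounding intervals of $\Omega_k''$ (their centers and diameters being bounded), identifies the kernel $h(\Omega')$ as the complement of $I_1 \cup I_2$, uses simple connectivity to force $I_1$ and $I_2$ to touch at exactly one point, and then convergence of lengths gives convergence of capacities in one stroke. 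You instead prove the upper bound softly (monotonicity plus outer regularity, no conformal maps at all) and concentrate the difficulty in the lower bound, obtained by contradiction: a length deficit in the two-interval sets $S_k$ produces a gap of definite size, whose Hausdorff limit lies in $S$ yet must belong to the kernel by maximality. Both endgames are valid and of comparable length; the paper's yields slightly more (the slit sets genuinely converge to the limit slits), while yours makes the semicontinuity structure transparent and avoids identifying the limit configuration. Two small points. First, the two kernel-maximality claims you assert (that every point off $\mathbb{R}$ lies in $\Omega''$, and that $\operatorname{int}(J_*) \subseteq \Omega''$) deserve the same explicit treatment as your gap argument: in each case set $D = \Omega'' \cup V$ with $V$ the relevant half-plane or strip, check that $D$ is a domain containing $\infty$, and split a compact subset of $D$ into a compact piece inside $\Omega''$ and a compact piece inside $V$; maximality of the kernel then gives $D \subseteq \Omega''$. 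Second, in your closing comment the feature that makes this case tractable is not only that the number of slits stays bounded by two, but that both slits lie on a \emph{single} line, so that Pommerenke's formula applies and length deficits are one-dimensional gaps; for $m > 2$ disks the slits sit on different horizontal lines and no such length formula is available, which is exactly the obstruction behind the open generalization mentioned in the remark of Section \ref{sec4}.
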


\begin{proof}
Translating and rotating if necessary, we can assume that the disks are centered on the real axis. Then each domain $\Omega_k':=\mathbb{C}_\infty \setminus F_k$ is symmetric with respect to the real axis. For $k \in \mathbb{N}$, let $h_k: \Omega_k' \to \Omega_k''$ be the normalized slit map of $\Omega_k'$. We shall prove that $\gamma(F_k) \to \gamma(F)$ by showing that every subsequence of $(\gamma(F_k))_{k \in \mathbb{N}}$ has a subsequence that converges to $\gamma(F)$.
\\

Indeed, first note that $\Omega_k' \to \Omega'$, where $\Omega':=\mathbb{C}_\infty \setminus F$. Let $(\gamma(F_k))_{k \in S}$ be a subsequence. Then by Lemma \ref{lem normal}, the corresponding sequence of normalized slit maps $(h_k)_{k \in S}$ has a subsequence $(h_k)_{k \in S'}$ that converges locally uniformly to a univalent function $h$ on $\Omega'$ normalized at infinity. By Theorem \ref{thm cara}, the corresponding subsequence of doubly connected horizontal slit domains $(\Omega_k'')_{k \in S'}$ converges to $h(\Omega')$ in the sense of Carath\'eodory. Now, the sequences of centers and diameters of the intervals bounding $\Omega_k''$ for $k \in S'$ must be bounded, otherwise the kernel of $(\Omega_k'')_{k \in S'}$ would not contain a neighborhood of infinity. Thus, passing to a subsequence if necessary, we can assume that the intervals bounding $\Omega_k''$ for $k \in S'$ converge to two closed intervals $I_1, I_2$ belonging to the same horizontal line, as $k \to \infty$ in $S'$.
\\

Now, it is easy to see that $h(\Omega')$ must be the domain bounded by the two intervals $I_1$ and $I_2$. Since $\Omega'$ is simply connected and $h$ is univalent, $h(\Omega')$ must also be simply connected and so the only possibility is that the two intervals intersect at exactly one point. By a result of Pommerenke, the analytic capacity of a linear compact set is equal to a quarter of its length (see e.g. \cite[Theorem 6.2, Chapter 1]{GAR}). Since the lengths of the intervals bounding $\Omega_k''$ for $k \in S'$ converge to the lengths of $I_1$ and $I_2$, it follows that
$$\gamma(\mathbb{C}_\infty \setminus \Omega_k'') \to \gamma(I_1 \cup I_2) = \gamma(\mathbb{C}_\infty \setminus h(\Omega')),$$
as $k \to \infty$ in $S'$. Finally, a simple change of variable argument shows that
$$\gamma(\mathbb{C}_\infty \setminus \Omega_k'') = \gamma(\mathbb{C}_\infty \setminus h_k(\Omega_k'))=\gamma(\mathbb{C}_\infty \setminus \Omega_k')=\gamma(F_k)$$
and
$$\gamma(\mathbb{C}_\infty \setminus h(\Omega')) = \gamma(\mathbb{C}_\infty \setminus \Omega')= \gamma(F),$$
so that $\gamma(F_k) \to \gamma(F)$ as $k \to \infty$ in $S'$.
\\

This completes the proof of the lemma.
\end{proof}

We can now prove Theorem \ref{thm0}. More precisely, let us construct a connected compact set $E$ with connected complement such that $\gamma(E)=\gamma_c(E)$, but the Ahlfors function is not the Cauchy transform of any complex Borel measure supported on $E$.

\begin{proof}
Let $E$ be the union of the nonrectifiable curve
$$\Gamma:=\{x+ix\operatorname{sin}(1/x): x\in (0,1/\pi]\}$$
with the line segment $[-i,i]$. Then $E$ is a connected compact set and $\Omega:=\mathbb{C}_\infty \setminus E$ is connected. Let $f$ be the Ahlfors function on $\Omega$.

\begin{figure}[h!t!b]
\begin{center}
\includegraphics[width=6cm, height=6cm]{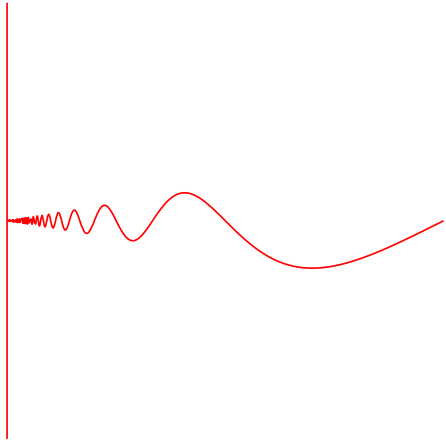}
\caption{The compact set $E$.}
\end{center}
\end{figure}

First, we show that $f$ is not the Cauchy transform of any complex Borel measure supported on $E$. The proof of this is similar to the one in \cite{SAM}. Suppose, in order to obtain a contradiction, that $f=\mathcal{C}\mu$ for such a measure $\mu$. Let $\Omega_{+}$ and $\Omega_{-}$ denote the upper and lower parts respectively of the complement of $\Gamma$ in the strip $\{z: 0<\operatorname{Re}z<1/\pi\}$. For $k \in \mathbb{N}$, let $R_k$ be the open rectangle
$$R_k:= \left\{z=x+iy : \frac{1}{\pi+k} <x<1/2, -1<y<1\right\}.$$
\newpage

\begin{figure}[h!t!b]
\begin{center}
\includegraphics[width=6cm, height=6cm]{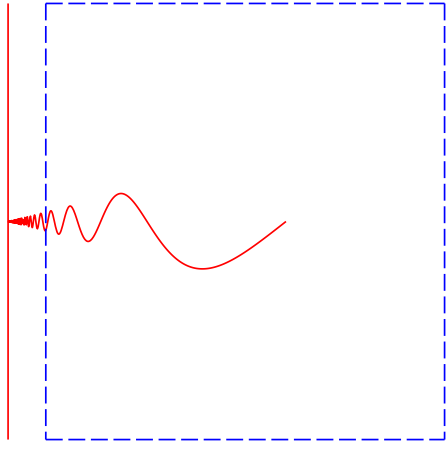}
\caption{The rectangle $R_k$.}
\end{center}
\end{figure}

It is easy to see that the restriction of $f$ to $R_k \setminus \Gamma$ is the Cauchy transform of a measure $\mu_k$ defined by $(1/2\pi i) f(\zeta) d\zeta$ on the linear pieces of the boundary and by $(1/2\pi i) (f_{+}(\zeta)-f_{-}(\zeta))d\zeta$ on $\Gamma \cap R_k$, where $f_{+}$ and $f_{-}$ are the boundary values of $f$ from inside $\Omega_{+}$ and $\Omega_{-}$ respectively, i.e.
$$f_{+}(\zeta) = \lim_{z \to \zeta, z \in \Omega_{+}}f(z)$$
and
$$f_{-}(\zeta) = \lim_{z \to \zeta, z \in \Omega_{-}}f(z).$$
It follows that on $R_k \setminus \Gamma$, we have
$$\mathcal{C}(\mu-\mu_k) = \mathcal{C}\mu - \mathcal{C}\mu_k = f-f=0.$$
Since $\Gamma_k:=\Gamma \cap R_k$ has area zero, we obtain from \cite[Corollary 1.3, Chapter 2]{GAR} that $\mu=\mu_k$ on $\Gamma_k$, i.e.
$$d\mu(\zeta)=(1/2\pi i)(f_{+}(\zeta)-f_{-}(\zeta))d\zeta$$
on $\Gamma_k$. This holds for all $k \in \mathbb{N}$, and therefore
$$d\mu(\zeta)=(1/2\pi i)(f_{+}(\zeta)-f_{-}(\zeta))d\zeta$$
on $\Gamma$.
\\

Now, recall from the introduction that since $E$ is connected, the Ahlfors function $f$ is the Riemann map of $\Omega$ onto $\mathbb{D}$ normalized by $f(\infty)=0$ and $f'(\infty)>0$. Note that the point $0 \in E$ defines two different accessible boundary points, and thus it follows from the correspondence of boundaries under Riemann maps (see e.g. \cite[Section 3, Chapter 2]{GOL}) that $f_{+}(\zeta)-f_{-}(\zeta) \to e^{i\theta_1}-e^{i\theta_2}$ as $\zeta \to 0$ on $\Gamma$, for some distinct points $e^{i\theta_1},e^{i\theta_2}$ on $\mathbb{T}$. Hence, if $k$ is sufficiently large, then
$$|f_{+}(\zeta)-f_{-}(\zeta)| \geq |e^{i\theta_1}-e^{i\theta_2}|/2 \qquad (\zeta \in \Gamma \setminus \Gamma_k)$$
and so
$$\|\mu\| \geq \frac{1}{2\pi} \int_{\Gamma \setminus \Gamma_k} |f_{-}(\zeta)-f_{+}(\zeta)| |d\zeta| \geq \frac{|e^{i\theta_1}-e^{i\theta_2}|}{4\pi} \int_{\Gamma \setminus \Gamma_k} |d\zeta| = \infty,$$
because $\Gamma$ is nonrectifiable whereas $\Gamma_k$ is. This contradiction shows that the Ahlfors function $f$ is not the Cauchy transform of any complex Borel measure supported on $E$.
\\

To complete the proof, it remains to show that $\gamma(E) = \gamma_c(E)$.
\\

For $k \in \mathbb{N}$, let $E_k$ be the union of $\Gamma \cap \overline{R_k}$ with the line segment $[-i,i]$. Then $(E_k)_{k \in \mathbb{N}}$ is an increasing sequence of compact subsets of $E$.

\begin{figure}[h!t!b]
\begin{center}
\includegraphics[width=6cm, height=6cm]{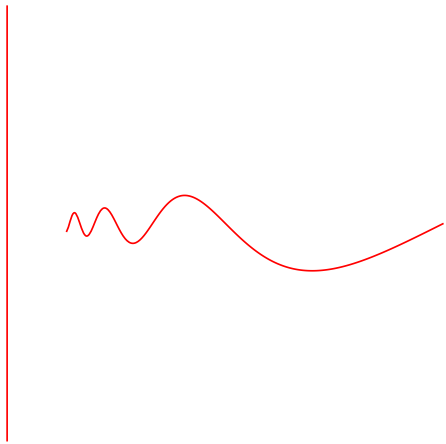}
\caption{The compact set $E_k$.}
\end{center}
\end{figure}
We claim that $\gamma(E_k) \to \gamma(E)$ as $k \to \infty$. By monotonicity of analytic capacity, it suffices to prove that there is a subsequence converging to $\gamma(E)$. Let $\Omega_k:= \mathbb{C}_\infty \setminus E_k$, so that $\Omega_k$ is a nondegenerate doubly connected domain. Then it is easy to see that $\Omega_k \to \Omega$. For $k \in \mathbb{N}$, let $g_k:\Omega_k \to \Omega_k'$ be the normalized Koebe map of $\Omega_k$. By Lemma \ref{lem normal}, the sequence $(g_k)_{k \in \mathbb{N}}$ has a subsequence converging locally uniformly to a univalent function $g$ on $\Omega$. By Theorem \ref{thm cara}, the corresponding subsequence of circle domains $(\Omega_k')$ converges to $g(\Omega)$ in the sense of Carath\'eodory. Note then that the sequences of centers and radii of the circles bounding the domains $\Omega_k'$ must be bounded, otherwise the kernel of the subsequence $(\Omega_k')$ would not contain a neighborhood of infinity. Therefore, passing to a subsequence if necessary, we can assume that the circles and radii of the circles bounding the domains $\Omega_k'$ converge to $c_1,c_2$ and $r_1,r_2$ respectively. Then it is easy to see that $g(\Omega)$ must be the domain bounded by the circles centered at $c_1,c_2$ of radius $r_1,r_2$ respectively. Now, since $g$ is univalent on $\Omega$, the image $g(\Omega)$ must be a nondegenerate simply connected domain and thus it follows that these circles must intersect at exactly one point. Hence by Lemma \ref{lem 2disks}, we get that
$$\gamma(\mathbb{C}_\infty \setminus \Omega_k') \to \gamma(\mathbb{C}_\infty \setminus g(\Omega))$$
along the subsequence. Again, a simple change of variable argument shows that
$$\gamma(\mathbb{C}_\infty \setminus \Omega_k')=\gamma(\mathbb{C}_\infty \setminus g(\Omega_k))=\gamma(\mathbb{C}_\infty \setminus \Omega_k) = \gamma(E_k)$$
and
$$\gamma(\mathbb{C}_\infty \setminus g(\Omega))=\gamma(\mathbb{C}_\infty \setminus \Omega)= \gamma(E).$$
This proves that $\gamma(E_k) \to \gamma(E)$.
\\

Finally, since every $E_k$ has finite Painlev\'e length, we obtain
$$\gamma_c(E) \leq \gamma(E)= \lim_{k \to \infty} \gamma(E_k) = \lim_{k \to \infty} \gamma_c(E_k) \leq \gamma_c(E),$$
where we used Corollary \ref{coro1} and the monotonicity of $\gamma_c$. This completes the proof of the theorem.

\end{proof}

As a consequence, we obtain a negative answer to Question \ref{prob2}.

\begin{corollary}
Let $E$ be as in Theorem \ref{thm0}. Then there is no complex Borel measure $\mu$ supported on $E$ such that
$$|\mathcal{C}_\mu(z)| \leq 1 \qquad (z \in \mathbb{C}_\infty \setminus E)$$ and
$\mu(E)=\gamma_c(E)$.
\end{corollary}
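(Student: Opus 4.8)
The plan is to deduce this immediately from Theorem \ref{thm0} by an argument by contradiction, the crux being the uniqueness of the Ahlfors function combined with the equality $\gamma(E) = \gamma_c(E)$ already established there. Suppose, toward a contradiction, that there is a complex Borel measure $\mu$ supported on $E$ with $|\mathcal{C}_\mu| \leq 1$ on $\Omega := \mathbb{C}_\infty \setminus E$ and $\mu(E) = \gamma_c(E)$. As noted in the introduction, for any such measure the Cauchy transform $\mathcal{C}_\mu$ is holomorphic outside $E$, vanishes at the point $\infty$, and satisfies $\mathcal{C}_\mu'(\infty) = -\mu(E)$.

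The key step is then to identify $-\mathcal{C}_\mu$ with the Ahlfors function. Setting $g := -\mathcal{C}_\mu$, I would observe that $g \in H^{\infty}(\Omega)$ with $|g| \leq 1$, while
$$g'(\infty) = -\mathcal{C}_\mu'(\infty) = \mu(E) = \gamma_c(E) = \gamma(E),$$
where the final equality is precisely the conclusion $\gamma(E) = \gamma_c(E)$ of Theorem \ref{thm0}. Thus $g$ is a competitor in the extremal problem defining $\gamma(E)$ that attains the supremum, so by the uniqueness of the Ahlfors function $f$ for $E$ we must have $g = f$.

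Finally, since $g = -\mathcal{C}_\mu = \mathcal{C}_{-\mu}$ and $-\mu$ is again a complex Borel measure supported on $E$, this would express $f = \mathcal{C}_{-\mu}$ as the Cauchy transform of a complex Borel measure on $E$, contradicting the part of Theorem \ref{thm0} asserting that the Ahlfors function for $E$ is not the Cauchy transform of any such measure. I expect no genuine analytic obstacle here: the entire difficulty is contained in Theorem \ref{thm0}, and the only points requiring care are the sign bookkeeping in $\mathcal{C}_\mu'(\infty) = -\mu(E)$ and the invocation of uniqueness, both of which are standard.
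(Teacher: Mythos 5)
Your proposal is correct and is essentially identical to the paper's own proof: both pass to $\mathcal{C}_{-\mu}$, use the equality $\gamma(E)=\gamma_c(E)$ from Theorem \ref{thm0} to see that it attains the extremal value $\gamma(E)$, and then invoke uniqueness of the Ahlfors function to contradict the non-representability statement of Theorem \ref{thm0}. The sign bookkeeping and the use of $\mathcal{C}_\mu'(\infty)=-\mu(E)$ match the paper's argument exactly.
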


\begin{proof}
Suppose that such a measure $\mu$ exists. Then $\mathcal{C}(-\mu)$ is a function holomorphic on $\mathbb{C}_\infty \setminus E$ with $|\mathcal{C}(-\mu)|\leq 1$ and $$\mathcal{C}(-\mu)'(\infty)=\mu(E)=\gamma_c(E)=\gamma(E).$$
By uniqueness of the Ahlfors function $f$, it follows that $f= \mathcal{C}(-\mu)$ on $\mathbb{C}_\infty \setminus E$, which contradicts Theorem \ref{thm0}.

\end{proof}

\section{A convergence theorem for Ahlfors functions}
\label{sec2}
In this section, we state and prove a convergence theorem for Ahlfors functions on which relies Theorem \ref{thm2}. The arguments below are similar to the ones used in a paper of Fortier Bourque and the author \cite{FOR} for the proof of a theorem on rational Ahlfors functions.
\\

First, we need a convergence lemma for normalized Koebe maps.

\begin{lemma}
\label{conv Koebe}
Let $(\Omega_k)$ be a sequence of nondegenerate $m$-connected domains, each containing the point $\infty$, such that $\Omega_k \to \Omega$, where $\Omega$ is a nondegenerate $m$-connected domain. For each $k$, let $g_k : \Omega_k \to \Omega_k'$ be the normalized Koebe map of $\Omega_k$.
\\

Then $(g_k)_{k \in \mathbb{N}}$ converges locally uniformly on $\Omega$ to the normalized Koebe map $g:\Omega \to \Omega'$. In particular, the nondegenerate circle domains $\Omega_k'$ converge to $\Omega'$ in the sense of Carath\'eodory.
\end{lemma}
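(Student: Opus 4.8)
The plan is to prove $g_k \to g$ locally uniformly by a normal-families-plus-uniqueness argument. The key point is that the normalized Koebe map is \emph{unique} for each domain (this is exactly the content of Koebe's circle domain theorem together with the observation that the identity is the only M\"obius transformation normalized at infinity), so any locally uniform limit of the $g_k$ that turns out to be a normalized Koebe map of $\Omega$ must coincide with $g$. As usual, it then suffices to show that \emph{every} subsequence of $(g_k)$ has a further subsequence converging locally uniformly to $g$; this yields convergence of the whole sequence.

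First I would invoke Lemma \ref{lem normal}: since $\Omega_k \to \Omega$, the kernel of $(\Omega_k)$ is $\Omega$, so from any given subsequence I can extract a further subsequence $(g_{k_l})$ converging locally uniformly on $\Omega$ to a univalent function $h$ normalized at infinity. By Theorem \ref{thm cara}, the images $\Omega_{k_l}' = g_{k_l}(\Omega_{k_l})$ then converge in the sense of Carath\'eodory to $h(\Omega)$. The crux is to identify $h$ with $g$, for which it is enough to show that $h(\Omega)$ is a nondegenerate $m$-connected circle domain: once that is known, $h$ is a normalized Koebe map of $\Omega$, and uniqueness forces $h = g$ and $h(\Omega) = \Omega'$.

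To see that $h(\Omega)$ is a circle domain I would argue exactly as in the proof of Lemma \ref{lem 2disks}. Each $\Omega_{k_l}'$ is a circle domain, so its complement is a union of $m$ disjoint closed round disks. The centers and radii of these disks must stay bounded, for otherwise the kernel of $(\Omega_{k_l}')$ would fail to contain a neighborhood of $\infty$, contradicting that $h(\Omega)$ is a domain containing $\infty$; so after passing to a further subsequence the centers converge to some $c_1,\dots,c_m$ and the radii to some $r_1,\dots,r_m \geq 0$. By the Carath\'eodory convergence it follows that $h(\Omega)$ is the complement of the $m$ closed disks $\overline{\mathbb{D}}(c_j,r_j)$. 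Since $h$ is univalent on the $m$-connected domain $\Omega$, the image $h(\Omega)$ is again a nondegenerate $m$-connected domain; in particular each limiting radius $r_j$ is strictly positive and the limiting disks are pairwise disjoint (no degeneration or merging occurs). Hence $h(\Omega)$ is a nondegenerate $m$-connected circle domain, as required.

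The main obstacle I anticipate is precisely this last identification step: one must rule out that the limiting disks degenerate to points ($r_j = 0$), overlap, or coincide, which would destroy either the $m$-connectedness or the circle-domain structure of $h(\Omega)$. The clean way to exclude all these pathologies is to use that univalence of $h$ is preserved in the limit (guaranteed by Lemma \ref{lem normal}) together with the topological fact that $h(\Omega)$, being the univalent image of a nondegenerate $m$-connected domain, has exactly $m$ nondegenerate complementary components; this pins down that the limiting configuration is genuinely a nondegenerate $m$-connected circle domain. Once $h = g$ is established for the extracted subsequence, the subsequence-of-every-subsequence principle gives $g_k \to g$ locally uniformly on $\Omega$, and the Carath\'eodory convergence $\Omega_k' \to \Omega'$ follows from Theorem \ref{thm cara} applied to the full sequence.
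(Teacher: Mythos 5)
Your proposal is correct and follows essentially the same route as the paper's proof: extraction via Lemma \ref{lem normal}, image convergence via Theorem \ref{thm cara}, boundedness and convergence of the centers and radii of the complementary disks, identification of the limit $h(\Omega)$ as the complement of the limiting disks, nondegeneracy and disjointness forced by univalence and $m$-connectedness, and finally uniqueness of the normalized Koebe map plus the subsequence principle. No gaps to report.
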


\begin{proof}
By Lemma \ref{lem normal}, every subsequence of $(g_k)_{k \in \mathbb{N}}$ has a subsequence that converges locally uniformly to a univalent function on $\Omega$.
\\

Let $h$ be a locally uniform limit of a subsequence $(g_k)_{k \in S}$. Then $h$ is normalized at infinity. Furthermore, by Theorem \ref{thm cara}, the corresponding subsequence of circle domains $(\Omega_k')_{k \in S}$ converge to $h(\Omega)$ in the sense of Carath\'eodory.
\\

We claim that $h(\Omega)$ is a nondegenerate circle domain, so that $h=g$ by uniqueness of the normalized Koebe map. Indeed, first note that $h(\Omega)$ is a nondegenerate $m$-connected domain since $h$ is univalent on $\Omega$. Moreover, the sequences of centers and radii of the circles bounding $\Omega_k'$ for $k \in S$ must be bounded, otherwise the kernel of $(\Omega_k')_{k \in S}$ would not contain a neighborhood of $\infty$. Passing to a subsequence if necessary, we can therefore assume that the centers of the circles bounding $(\Omega_k')_{k \in S}$ converge to $c_1,\dots,c_m \in \mathbb{C}$ and that the corresponding radii converge to $r_1,\dots,r_m \in \mathbb{R}$. Since $\Omega_k' \to h(\Omega)$ as $k \to \infty$ in $S$, clearly $h(\Omega)$ is the domain bounded by the circles centered at $c_1,\dots,c_m$ and of corresponding radii $r_1,\dots,r_m$. But $h(\Omega)$ is a nondegenerate $m$-connected domain, so these circles must be disjoint and nondegenerate. In other words, $h(\Omega)$ is a nondegenerate circle domain, from which it follows that $h=g$.
\\

This shows that every subsequence of $(g_k)_{k \in \mathbb{N}}$ has a subsequence that converges to $g$, which of course implies that $g_k \to g$.
\\

Finally, the fact that $\Omega_k' \to \Omega'$ is a consequence of Theorem \ref{thm cara}.
\end{proof}

\begin{remark}
A similar argument shows that if $D_k, D$ are nondegenerate $m$-connected circle domains, then $D_k \to D$ if and only if the sequences of centers and radii of the circles bounding the domains $D_k$ converge to the centers and radii of the circles bounding $D$.
\end{remark}

\begin{remark}
Lemma \ref{conv Koebe} is false without the assumption that the connectivity of $\Omega$ and of the $\Omega_k$'s is the same. Indeed, consider a sequence $(\Omega_k)$ of doubly connected domains bounded by two circles of radius one that get arbitrarily close to each other, so that the limit domain $\Omega$ is the complement of two closed disks of radius one intersecting at exactly one point. Then the normalized Koebe maps $g_k$'s are the identity maps, which clearly don't converge to the normalized Koebe map of $\Omega$.
\end{remark}

The following convergence theorem for Ahlfors functions is the main result of this section.

\begin{theorem}
\label{conv Ahlfors}
Let $(\Omega_k)$ be a sequence of nondegenerate $m$-connected domains, each containing the point $\infty$, such that $\Omega_k \to \Omega$, where $\Omega$ is a nondegenerate $m$-connected domain. Let $f_k,f$ be the Ahlfors functions on $\Omega_k$ and $\Omega$ respectively.
\\

Then $f_k \to f$ locally uniformly on $\Omega$.
\end{theorem}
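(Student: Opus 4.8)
The plan is to combine a normal families argument with the uniqueness of the Ahlfors function, the only genuine analytic input being the convergence of capacities $\gamma(E_k) \to \gamma(E)$, where $E_k := \mathbb{C}_\infty \setminus \Omega_k$ and $E := \mathbb{C}_\infty \setminus \Omega$. First I would reduce the whole-sequence statement to a subsequential one: it suffices to show that every subsequence of $(f_k)$ has a sub-subsequence converging locally uniformly to $f$. Fix a subsequence. Since $|f_k| \leq 1$ and $\Omega_k \to \Omega$ (so that every compact $K \subseteq \Omega$ lies in $\Omega_k$ for $k$ large, by the definition of kernel convergence), Montel's theorem yields a sub-subsequence $(f_{k_l})$ converging locally uniformly on $\Omega$ to some $F$ with $|F| \leq 1$ and $F(\infty)=0$. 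Writing $f_k'(\infty)$ as a Cauchy-type contour integral of $f_k$ over a fixed circle near $\infty$, local uniform convergence gives $F'(\infty) = \lim_l f_{k_l}'(\infty) = \lim_l \gamma(E_{k_l})$. As $F$ is a competitor for the extremal problem defining $\gamma(E)$, one already gets $F'(\infty) \leq \gamma(E)$.

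The heart of the matter is the reverse inequality $\liminf_k \gamma(E_k) \geq \gamma(E)$, and I would pass to circle domains to prove it. By Lemma \ref{conv Koebe} the normalized Koebe maps satisfy $g_k \to g$ and $\Omega_k' \to \Omega'$, with the centers and radii of the circles bounding $\Omega_k'$ converging to those of the \emph{disjoint} disks bounding $\Omega'$; moreover the change-of-variable invariance of $\gamma$ under conformal maps normalized at infinity (as used already in the proof of Theorem \ref{thm0}) gives $\gamma(E_k)=\gamma(\mathbb{C}_\infty \setminus \Omega_k')$ and $\gamma(E)=\gamma(\mathbb{C}_\infty \setminus \Omega')$. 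It then remains to establish lower semicontinuity of $\gamma$ for these converging configurations of disjoint disks. For this I would take the Ahlfors function $\Phi$ of the circle domain $\Omega'$ and extend it by Schwarz reflection across each (analytic) bounding circle: since $|\Phi|=1$ on $\partial\Omega'$ and $\Phi$ has no zeros in a thin inner collar, the reflected function $\Phi^*$ is holomorphic on $\mathbb{C}_\infty \setminus \bigcup_i \overline{\mathbb{D}}(c_i,r_i-\eta)$ for small $\eta>0$, with $|\Phi^*| \leq M_\eta$ on the points lying within distance $O(\eta)$ of the circles, where $M_\eta \to 1$ as $\eta \to 0$. For $k$ large the disks of $\Omega_k'$ contain the shrunken disks $\overline{\mathbb{D}}(c_i,r_i-\eta)$, so $\Phi^*$ is defined on $\Omega_k'$ and bounded there by $M_\eta$; since $\Phi^*=\Phi$ near $\infty$, the function $M_\eta^{-1}\Phi^*$ is a competitor on $\Omega_k'$ with derivative $M_\eta^{-1}\gamma(\mathbb{C}_\infty \setminus \Omega')$ at $\infty$, whence $\gamma(\mathbb{C}_\infty \setminus \Omega_k') \geq M_\eta^{-1}\gamma(\mathbb{C}_\infty \setminus \Omega')$. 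Letting $k \to \infty$ and then $\eta \to 0$ yields the lower bound. This reflection step — in particular the uniform control $M_\eta \to 1$ and the containment of the shrunken disks — is where the nondegeneracy of the limit (the disks of $\Omega'$ being disjoint, guaranteed precisely because $\Omega$ is genuinely $m$-connected) is essential, and I expect it to be the main technical obstacle.

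Combining the two bounds, along the convergent sub-subsequence one obtains $F'(\infty) = \lim_l \gamma(E_{k_l}) = \gamma(E)$. Thus $F$ is a competitor on $\Omega$ attaining the maximal derivative $\gamma(E)$, so by uniqueness of the Ahlfors function $F=f$. Since every subsequence of $(f_k)$ admits a sub-subsequence converging locally uniformly to the \emph{same} limit $f$, the full sequence converges, $f_k \to f$ locally uniformly on $\Omega$. As a byproduct one records $\gamma(E_k)=f_k'(\infty) \to f'(\infty)=\gamma(E)$, which is the capacity convergence needed elsewhere.
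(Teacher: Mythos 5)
Your proposal is correct and is essentially the paper's own argument: both rest on Lemma \ref{conv Koebe} and on extending the Ahlfors function of the limit circle domain $\Omega'$ by Schwarz reflection so that, for $k$ large, a slightly rescaled copy of it is a competitor on $\Omega_k'$ with constant tending to $1$, and both finish with Montel's theorem plus uniqueness of the Ahlfors function. The only (minor, organizational) difference is that the paper proves locally uniform convergence $\phi_k \to \phi$ of the circle-domain Ahlfors functions and then composes with the Koebe maps ($f_k = \phi_k \circ g_k \to \phi \circ g = f$), whereas you transport only the capacity values $\gamma(E_k)=\gamma(\mathbb{C}_\infty \setminus \Omega_k')$ back to $\Omega$ and run the normal-families/uniqueness step there directly on the $f_k$.
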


\begin{proof}
Let $g_k:\Omega_k \to \Omega_k'$, $g:\Omega \to \Omega'$ be the normalized Koebe maps of $\Omega_k$ and $\Omega$ respectively. By Lemma \ref{conv Koebe}, $g_k \to g$ locally uniformly on $\Omega$ and $\Omega_k' \to \Omega'$. Let $\phi_k,\phi$ be the Ahlfors functions on $\Omega_k'$ and $\Omega'$ respectively. We claim that $\phi_k \to \phi$ locally uniformly on $\Omega'$. Let us prove this by showing that every subsequence of $(\phi_k)_{k \in \mathbb{N}}$ has a subsequence that converges to $\phi$.
\\

First note that by Montel's theorem, every subsequence of $(\phi_k)_{k \in \mathbb{N}}$ has a locally uniformly convergent subsequence. Let $\tilde{\phi}$ be the limit of a subsequence. Then $\tilde{\phi}$ is holomorphic on $\Omega'$ and satisfies $|\tilde{\phi}|\leq 1$, so we have $0 \leq \tilde{\phi}'(\infty) \leq \phi'(\infty)$.
\\

Now, by the Schwarz reflection principle, the function $\phi$ extends analytically to a neighborhood of $\overline{\Omega'}$ (recall that the Ahlfors function on a finitely connected domain is a proper map). Let $U$ be an open set containing $\overline{\Omega'}$ on which $\phi$ is bounded. By the remark following Lemma \ref{conv Koebe}, we have that $\Omega_k'$ is contained in $U$ for all $k$ sufficiently large, so that $\phi$ is defined and holomorphic on $\Omega_k'$ for these $k$'s. Let $M_k:=\sup\{|\phi(z)|:z \in \Omega_k'\}$. Then clearly $M_k \to 1$ as $k \to \infty$. Moreover, the function $M_k^{-1}\phi$ is holomorphic on $\Omega_k'$ and satisfies $|M_k^{-1}\phi|\leq 1$, so we have $M_k^{-1}\phi'(\infty) \leq \phi_k'(\infty)$. Therefore,
$$\phi'(\infty) \leq \liminf_{k \to \infty} \phi_k'(\infty) \leq \tilde{\phi}'(\infty)$$
and so $\tilde{\phi}'(\infty)=\phi'(\infty)$, which implies that $\tilde{\phi}=\phi$ by uniqueness of the Ahlfors function.
\\

This shows that $\phi_k \to \phi$ locally uniformly on $\Omega'$.
\\

Finally, a simple change of variable argument shows that $f=\phi \circ g$ and $f_k=\phi_k \circ g_k$ for all $k$. Since $g_k \to g$ locally uniformly on $\Omega$ and $\phi_k \to \phi$ locally uniformly on $\Omega'$, it follows that $f_k \to f$ locally uniformly on $\Omega$.

\end{proof}

\begin{remark}
Theorem \ref{conv Ahlfors} is false without any assumption on the connectivity of the domains $\Omega_k$, even if the limit domain $\Omega$ is assumed to be bounded by analytic curves. Indeed, consider a sequence $(z_k)$ dense in the unit disk $\mathbb{D}$ and for $k \in \mathbb{N}$, let $\Omega_k$ be the complement in the Riemann sphere of disjoint closed disks centered at $z_1,\dots,z_k$, contained in $\mathbb{D}$ and of radii sufficiently small so that the analytic capacity of $\mathbb{C}_\infty \setminus \Omega_k$ is less than $1/2$. Such a sequence exists by the outer regularity of analytic capacity and by the fact that the analytic capacity of a finite set is zero. Then it is easy to see that $\Omega_k \to \mathbb{C}_\infty \setminus \overline{\mathbb{D}}$ as $k \to \infty$, but the corresponding Ahlfors functions do not converge locally uniformly to the Ahlfors function on $\mathbb{C}_\infty \setminus \overline{\mathbb{D}}$, because otherwise we would have $\gamma(\mathbb{C}_\infty \setminus \Omega_k) \to \gamma(\overline{\mathbb{D}})=1.$ This example was mentioned to the author by Maxime Fortier Bourque.
\end{remark}

\section{Proof of Theorem \ref{thm2}}
\label{sec3}
We can now proceed with the proof of Theorem \ref{thm2}.

\begin{proof}

Let $E \subseteq \mathbb{C}$ be compact and suppose that there exist a sequence $(E_k)_{k \in \mathbb{N}}$ of compact subsets of $E$ with the following properties :

\begin{enumerate}[\rm(i)]
\item [(i)] every $E_k$ has finite Painlev\'e length;
\item [(ii)] there exist an integer $m$ such that $\Omega$ and every $\Omega_k$ are nondegenerate $m$-connected domains, where $\Omega_k$ and $\Omega$ are the unbounded components of $\mathbb{C}_\infty \setminus E_k$ and $\mathbb{C}_\infty \setminus E$ respectively;
\item [(iii)] the sequence of domains $(\Omega_k)_{k \in \mathbb{N}}$ converges to $\Omega$ in the sense of Carath\'eodory.
\end{enumerate}
We have to show that $\gamma(E)=\gamma_c(E)$.
\\

Let $f_k,f$ be the Ahlfors functions on $\Omega_k$ and $\Omega$ respectively. By Theorem \ref{conv Ahlfors}, $f_k \to f$ locally uniformly on $\Omega$, so in particular $f_k'(\infty) \to f'(\infty)$. Thus, we obtain
$$\gamma_c(E) \leq \gamma(E) = f'(\infty)=\lim_{k \to \infty} f_k'(\infty)= \lim_{k \to \infty} \gamma(E_k) = \lim_{k \to \infty} \gamma_c(E_k) \leq \gamma_c(E),$$
where we used Corollary \ref{coro1} and the monotonicity of $\gamma_c$. This completes the proof of the theorem.

\end{proof}

\section{Proof of Theorem \ref{thm1}}
\label{sec4}
In this section, we describe the construction of the compact sets of Theorem \ref{thm1}.
\\

More precisely, for every $m \in \mathbb{N}$, we shall construct a compact set $E^m$ with $m$ nondegenerate components such that $\gamma(E^m)=\gamma_c(E^m)$ but the Ahlfors function is not the Cauchy transform of any complex Borel measure supported on $E^m$. The construction is similar to the one in Sect. \ref{sec1}.

\begin{proof}
Consider first the case $m=1$. Let $E^1$ be the union of the curve
$$\Gamma:=\{ x+ix\operatorname{sin}(1/x): x \in (0,1/\pi] \}$$
with the line segments $[-i,0]$, $[-i,1/\pi -i]$ and $[1/\pi -i, 1/\pi]$. Then $E^1$ is compact and connected. Let $f$ be the Ahlfors function for $E^1$. Recall that $f$ is assumed to be identically zero in the bounded component of $\mathbb{C}_\infty \setminus E^1$.

\begin{figure}[h!t!b]
\begin{center}
\includegraphics[width=6cm, height=6cm]{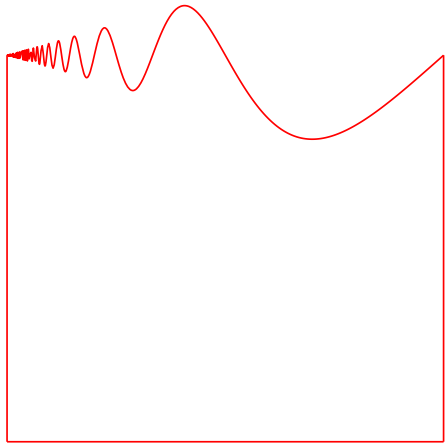}
\caption{The compact set $E^1$.}
\end{center}
\end{figure}

First, we claim that $f$ is not the Cauchy transform of any complex Borel measure supported on $E^1$. Indeed, suppose that $f=\mathcal{C}_\mu$ for such a $\mu$. Let $\Omega_{+}$ and $\Omega_{-}$ denote the upper and lower parts respectively of the complement of $\Gamma$ in the strip $\{z: 0<\operatorname{Re}z<1/\pi\}$. For $k \in \mathbb{N}$, let $R_k$ be the open rectangle
$$R_k:= \left\{z=x+iy : \frac{1}{\pi+k} <x<1/\pi, -1/2<y<1/2\right\}.$$
\newpage

\begin{figure}[h!t!b]
\begin{center}
\includegraphics[width=6cm, height=6cm]{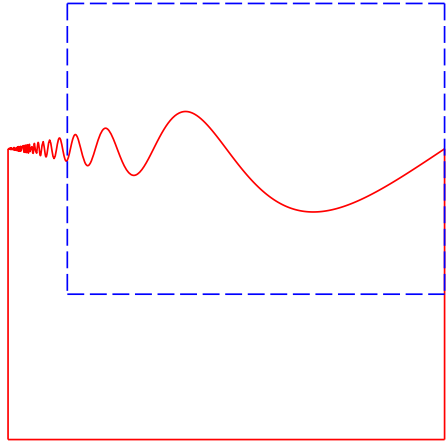}
\caption{The rectangle $R_k$.}
\end{center}
\end{figure}

Clearly, the restriction of $f$ to $R_k \cap \Omega_{+}$ is the Cauchy transform of a measure $\mu_k$ defined by $1/(2\pi i) f_{+}(\zeta) d\zeta$ on $\Gamma \cap R_k$ and $1/(2\pi i) f(\zeta) d\zeta$ on the linear parts of the boundary.

It follows that on $R_k \cap \Omega_{+}$, we have
$$\mathcal{C}_{\mu - \mu_k} = \mathcal{C}_{\mu} - \mathcal{C}_{\mu_k} = f-f=0.$$
On the other hand, on $R_k \cap \Omega_{-}$, $\mathcal{C}_{\mu_k}=0$ by Cauchy's theorem and $\mathcal{C}_\mu = f =0$, so that
$$\mathcal{C}_{\mu-\mu_k} = \mathcal{C}_{\mu} - \mathcal{C}_{\mu_k} = 0.$$
Since $\Gamma \cap R_k$ has area zero, it follows from \cite[Corollary 1.3, Chapter 2]{GAR} that $\mu=\mu_k$ on $\Gamma \cap R_k$, i.e.
$$d\mu(\zeta)=(1/2\pi i)f_{+}(\zeta) d\zeta$$
on $\Gamma \cap R_k$. This holds for all $k \in \mathbb{N}$, and so $d\mu(\zeta)=(1/2\pi i)f_{+}(\zeta) d\zeta$ on $\Gamma$. This gives a contradiction, since
$$\|\mu\| \geq \int_{\Gamma} |d\mu(\zeta)| = \frac{1}{2\pi} \int_{\Gamma} |f_{+}(\zeta)||d\zeta| = \frac{1}{2\pi} \int_{\Gamma}|d\zeta| = \infty,$$
because $\Gamma$ is nonrectifiable and $|f| \equiv 1$ on $E^1$, by properness of the Ahlfors function.
\\

Let us prove now that $\gamma(E^1)=\gamma_{c}(E^1)$. For $k \in \mathbb{N}$, let $E^1_k$ be the union of the portion of $\Gamma$ inside $\overline{R_k}$ with the line segments $[-i,0]$, $[-i,1/\pi -i]$ and $[1/\pi -i, 1/\pi]$. Then $E^1_k$ is a connected compact subset of $E^1$.

\newpage

\begin{figure}[h!t!b]
\begin{center}
\includegraphics[width=6cm, height=6cm]{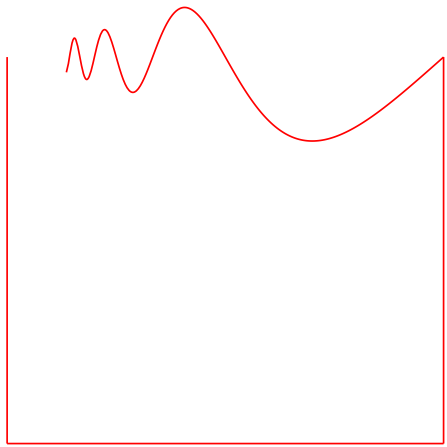}
\caption{The compact set $E^1_k$.}
\end{center}
\end{figure}
Furthermore, since $E^1_k$ has finite length, it has finite Painlev\'e length. Also, it is easy to see that $\Omega_k \to \Omega$, where $\Omega_k$ and $\Omega$ are the unbounded components of $\mathbb{C}_\infty \setminus E^1_k$ and $\mathbb{C}_\infty \setminus E^1$ respectively. By Theorem \ref{thm2}, $\gamma_c(E^1)=\gamma(E^1)$.
\\

Finally, if $m>1$, let $E^m$ be the union of $E^1$ with disjoint nondegenerate connected full compact sets $F_1, \dots, F_{m-1}$ of finite Painlev\'e length contained in the unbounded component of $\mathbb{C}_\infty \setminus E^1$. A simple modification of the above argument (with the sets $E^1_k$ replaced by $E^1_k \cup F_1 \cup \dots \cup F_{m-1}$) shows that $\gamma(E^m)=\gamma_c(E^m)$ and that the Ahlfors function for $E^m$ is not the Cauchy transform of any measure supported on $E^m$.
\end{proof}

\begin{remark}
We were not able to find examples of such compact sets that are disconnected and have connected complement. A natural idea is to replace the set $E^1$ in the above proof by the compact set $E$ of Theorem \ref{thm0}, but then the difficulty is in showing that the analytic capacity and the cauchy capacity of the resulting compact set are equal. In order to prove this, one would need a generalization of Lemma \ref{lem 2disks} for the convergence of the analytic capacities of $m$ disjoint closed disks where one pair of disks intersect at one point in the limit. This seems to be true but we are not able to prove it.
\end{remark}

\begin{remark}
For the compact set $E^m$ as above, there are other functions in $H^{\infty}(\mathbb{C}_\infty \setminus E^m)$ bounded by one in modulus whose derivatives at infinity are equal to $\gamma(E^m)$; it suffices to consider any function equal to the Ahlfors function $f$ in the unbounded component of $\mathbb{C}_\infty \setminus E^m$ and equal to an arbitrary holomorphic function $g$ with $|g| \leq 1$ in the bounded component of $\mathbb{C}_\infty \setminus E^m$. The above proof shows that if $g$ is identically zero, then the resulting function is not the Cauchy transform of any measure supported on $E^m$. Xavier Tolsa raised the question of whether this is true for any choice of $g$. A simple modification of the above proof shows that the answer is positive provided that the integral
$$\int_{\Gamma}|f_+(\zeta)-g_-(\zeta)||d\zeta|$$
diverges. This holds for instance if $g$ stays at a positive distance from $f_{+}(0)$ near the point $0$.
\end{remark}

\begin{remark}
The arguments used in this paper could be considerably simplified if one could prove that analytic capacity is \textit{inner regular}, in the sense that if $E_1 \subseteq E_2 \subseteq \dots$ is an increasing sequence of compact sets and if $E:=\cup_k E_k$ is compact, then
$$\gamma(E_k) \to \gamma(E).$$
This seems to be a open problem. It is closely related to the so-called \textit{capacitability} of analytic capacity.
\end{remark}

\acknowledgments{The author thanks Xavier Tolsa for helpful discussions.}

\bibliographystyle{amsplain}

\end{document}